\newtheorem{theorem}{Theorem}[section]
\newtheorem{lemma}[theorem]{Lemma}
\theoremstyle{definition}
\newtheorem{definition}[theorem]{Definition}
\newtheorem{proposition}[theorem]{Proposition}
\newtheorem{corollary}[theorem]{Corollary}
\theoremstyle{remark}
\newtheorem{remark}[theorem]{Remark}
\numberwithin{equation}{section}
\begin{document}

\title{\rm On the topology of the spaces of curvature constrained plane curves}

\author{Jos\'{e} Ayala}
\address{FIA, Universidad Arturo Prat, Iquique, Chile}
\email{jayalhoff@gmail.com}
\subjclass[2000]{Primary 53A04, 53C42; Secondary 57N20}
\date{}
\dedicatory{}
\keywords{Bounded curvature, regular homotopy, homotopy classes.}
\maketitle

\begin{abstract}It is well known that plane curves with the same endpoints are homotopic. An analogous claim for plane
curves with the same endpoints and bounded curvature still remains open. In this work we find
necessary and sufficient conditions for two plane curves with bounded curvature to be deformed, one to
another, by a continuous one-parameter family of curves also having bounded curvature. We conclude that
the space of these curves has either one or two connected components, depending on the distance between
the endpoints. The classification theorem here presented answers a question raised in 1961 by L. E. Dubins.
\end{abstract}

\section{Introduction}

It is well known that any two plane curves with the same endpoints are homotopic. Surprisingly, an analogous claim for plane curves with the same endpoints and a bound on the curvature still remains open. Since the plane is simply connected, all immersed plane curves connecting two different points in the plane are regularly homotopic. On the other hand, H. Whitney in \cite{whitney} classified the regular homotopy classes of closed curves in the plane. A natural step forward is to study homotopy classes in spaces of curvature constrained plane curves. A $\kappa$-constrained curve is required to be smooth with the curvature bounded by a positive constant $\kappa$. In this work we obtain necessary and sufficient conditions for any two $\kappa$-constrained plane curves to be deformed one into another by a continuous one-parameter family of $\kappa$-constrained plane curves. We pay special attention in the interaction between the bound on the curvature and the distance between the endpoints in a $\kappa$-constrained curve. 

Our main result, Theorem \ref{mainthmcbc}, gives the number of homotopy classes in spaces of $\kappa$-constrained plane curves for any choice of initial and final points in $\mathbb R^2$. Let $\kappa=\frac{1}{r}$ be the bound on the curvature (with $r>0$ being the minimum radius of curvature), and, let $d$ be the euclidean distance between the initial and final points in a curve: if $d=0$ then any two closed $\kappa$-constrained plane curves are $\kappa$-constrained homotopic, i.e. are homotopic and satisfy the same curvature bound throughout the deformation; if $0<d<2r$, we prove the existence of two homotopy classes of $\kappa$-constrained plane curves - one homotopy class includes the straight line between the initial and final points, and the other one includes the two outer arcs of circle in the leftmost illustration in Figure \ref{figless2r}; finally if $d>2r$ then any two $\kappa$-constrained plane curves are $\kappa$-constrained homotopic one to another. In addition, if $0<d<2r$ we prove in Theorem \ref{Iconnect} the existence of a planar region where only embedded $\kappa$-constrained plane curves can be defined (see $\mathcal I$ in Figure \ref{figless2r}). Also, there are regions where $\kappa$-constrained plane curves cannot be defined (see $\mathcal E$ in Figure \ref{figless2r} and Figure \ref{figexampcurves}).

Curves with a bound on the curvature and fixed initial and final points and tangent vectors have been extensively studied in science and engineering as well as in mathematics. Most of the papers in this area have been focused on issues related with reachability and optimality cf. \cite{hee, paperb, papera, cockayne, dubins 1, johnson, melzak, reeds, soures}. This paper presents the first results on the spaces of curves with a bound on the curvature where the initial and final vectors are allowed to vary. The classification theorem here presented answers a question raised in 1961 by L. E. Dubins in \cite{dubins 2}.

 \section{Preliminaries}

Throughout this work we consider a parametrised plane curve to be the continuous image in $\mathbb R^2$ of a closed interval.
\begin{definition} \label{cbc} An arc-length parameterised plane curve $\sigma: [0,s]\rightarrow {\mathbb R}^2$ is called a
{\it $\kappa$-constrained plane curve} if:
\begin{itemize}
\item $\sigma$ is $C^1$ and piecewise $C^2$;
\item $||\sigma''(t)||\leq \kappa$, for all $t\in [0,s]$ when defined, $\kappa>0$ a constant.
\end{itemize}
\end{definition}

The first condition in Definition \ref{cbc} means that a $\kappa$-constrained curve has continuous first derivative and piecewise continuous second derivative. The second condition means that a $\kappa$-constrained plane curve has absolute curvature bounded above (when defined) by a positive constant $\kappa=\frac{1}{r}$ where $r>0$ is the minimum radius of curvature. Denote the interval $[0,s]$ by $I$. The length of $\sigma$ restricted to $[a,b]\subset I$ is denoted by ${\mathcal L}(\sigma,a,b)$ and $\mathcal L(\gamma)=s$. Since the curves here studied lie in $\mathbb R^2$ sometimes we refer to a $\kappa$-constrained plane curve just by {\it $\kappa$-constrained curve}. 

The interior, boundary and closure of a subset $X$ in a topological space are denoted by $int(X)$, $\partial(X)$ and $cl(X)$ respectively. The ambient space of the curves here studied is $\mathbb R^2$ with the topology induced by the euclidean metric.

\begin{definition} Given $x,y\in {\mathbb R}^2$. The space of $\kappa$-constrained plane curves from $x$ to $y$ is denoted $\Sigma(x,y)$.
\end{definition}

Throughout this note we consider the space $\Sigma(x,y)$ together with the $C^1$ metric. Suppose a $\kappa$-constrained curve is continuously deformed under a parameter $p$. For each $p$ we reparametrise the corresponding curve by its arc-length. Thus $\sigma: [0,s_p]\rightarrow {\mathbb R}^2$ describes a deformed curve at parameter $p$, and $s_p$ corresponds to its arc-length.

\begin{definition} \label{homcbc} Given $\sigma, \gamma \in \Sigma(x,y)$. A {\it $\kappa$-constrained homotopy} between $\sigma: [0,s_0] \rightarrow \mathbb R^2$ and $ \gamma : [0,s_1] \rightarrow \mathbb R^2$ corresponds to a continuous one-parameter family of immersed plane curves $ { H}: [0,1] \rightarrow \Sigma(x,y)$ such that:
\begin{enumerate}
\item $ { H}(0)=\sigma(t)$ for $t\in [0,s_0]$ and ${H}(1)= \gamma(t)$ for $t\in [0,s_1]$.
\item ${ H}(p): [0,s_p] \rightarrow \mathbb R^2$ is an element of $\Sigma(x,y)$ for all $p\in [0,1]$.
\end{enumerate}
We say that the curves $\sigma$ and $\gamma$ are $\kappa$-{\it constrained homotopic}.
\end{definition}

\begin{remark}{\it Homotopy classes in $\Sigma({x,y})$}. Given $x,y\in {\mathbb R^2}$ then:
\end{remark}
\begin{itemize}
\item two curves are $\kappa$-constrained homotopic if there exists a {\it $\kappa$-constrained homotopy} from one curve to another. The previously described relation defined by $\sim$ is an equivalence relation;
\item a {\it homotopy class} in $\Sigma({x,y})$ corresponds to an equivalence class in $\Sigma(x,y)/\sim$;
\item a {\it homotopy class} is a maximal path connected set in $\Sigma(x,y)/\sim$;
\item we denote by $|\Sigma(x,y)|$ the number of {\it homotopy classes} in $\Sigma(x,y)$.
\end{itemize}

\begin{definition}\label{fragfbp} A {\it fragmentation} for a curve $\sigma:I \rightarrow {\mathbb R}^2$ corresponds to a finite sequence $0=t_0<t_1\ldots <t_m=s$ of elements in $I$ such that ${\mathcal L}(\sigma,t_{i-1},t_i)<  r$
with
$\sum_{i=1}^m {\mathcal L}(\sigma,t_{i-1},t_i) =s$
We denote by a {\it fragment}, the restriction of $\sigma$ to the interval determined by two consecutive elements in the fragmentation.
\end{definition}

The following results are presented for $1$-constrained curves and can be found in \cite{papera}. These give lower bounds for the length of curves when compared with arcs in unit circles and line segments. These results can be easily adapted for $\kappa$-constrained curves. Consider $\sigma (t)=(r(t)\cos \theta(t), r(t)  \sin \theta(t))$ in polar coordinates.

\begin{lemma}\label{rad} (cf. Lemma 2.8 in \cite{papera}) For any curve $\sigma :[0,s] \rightarrow {\mathbb R}^2$ with $\sigma(0)= (1,0)$, $r(t)\geq 1$, and $\theta(s)=\eta$, one has ${\mathcal L}(\sigma)\geq \eta$.
\end{lemma}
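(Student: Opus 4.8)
The plan is to estimate the length of $\sigma$ directly from its expression in polar coordinates. First I would record that the hypothesis $r(t)\geq 1>0$ forces the curve to avoid the origin, so the angular function $\theta(t)$ is a genuine continuous lift which inherits the piecewise $C^1$ regularity of $\sigma$; in particular the prescribed values $\theta(0)=0$ (from $\sigma(0)=(1,0)$) and $\theta(s)=\eta$ are the actual endpoint values of this lift, and $\theta$ is differentiable off a finite set.

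Next I would compute the speed. Differentiating $\sigma(t)=(r(t)\cos\theta(t),\,r(t)\sin\theta(t))$ and collapsing the cross terms with $\cos^2+\sin^2=1$ gives the standard polar speed
\[
\|\sigma'(t)\|=\sqrt{r'(t)^2+r(t)^2\,\theta'(t)^2},
\]
so that $\mathcal L(\sigma)=\int_0^s \sqrt{r'^2+r^2\theta'^2}\,dt$, the integral being understood as a sum over the finitely many pieces on which $\sigma$ is $C^2$.

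The heart of the argument is a pointwise lower bound for the integrand. Discarding the nonnegative term $r'^2$ and then invoking $r(t)\geq 1$ yields, at every point of differentiability,
\[
\sqrt{r'^2+r^2\theta'^2}\ \geq\ r\,|\theta'|\ \geq\ |\theta'|\ \geq\ \theta'.
\]
Integrating this chain over $[0,s]$ and recognizing the final integrand as a total derivative gives the claim:
\[
\mathcal L(\sigma)\ \geq\ \int_0^s \theta'(t)\,dt\ =\ \theta(s)-\theta(0)\ =\ \eta.
\]

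I do not anticipate a serious obstacle: once the length integral is set up, the estimate is elementary. The only delicate point is the passage from $|\theta'|$ to $\theta'$, which is needed precisely because $\theta$ need not be monotone—the curve may wind its angle back and forth—but the inequality $|\theta'|\geq\theta'$ absorbs exactly this phenomenon, and any backtracking only strengthens the bound. A reader wanting full rigor should split the integral across the finitely many $C^2$ fragments and note that the telescoping of $\theta$ across the breakpoints, where $\sigma$ is $C^1$, still yields the net change $\theta(s)-\theta(0)=\eta$.
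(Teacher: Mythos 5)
The paper states this lemma without proof, simply citing Lemma 2.8 of the Ayala--Rubinstein reference, and your argument is precisely the standard one used there: write the speed in polar coordinates, drop the $r'^2$ term, and use $r\geq 1$ together with $|\theta'|\geq\theta'$ before integrating. The proof is correct, and your attention to the choice of lift with $\theta(0)=0$ and to splitting the integral over the piecewise-smooth fragments covers the only delicate points.
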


\begin{lemma}\label{seg} (cf. Lemma 2.9 in \cite{papera}) For any $C^1$ curve $\sigma :[0,s]\rightarrow {\mathbb R}^2$ with $\sigma(0)=(0,0)$, $\sigma(s)=(x,z)$ and $z\geq 0$, one has ${\mathcal L}(\sigma) \geq z$.
\end{lemma}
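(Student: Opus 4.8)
The plan is to reduce the statement to the elementary fact that orthogonal projection onto a line does not increase length, so that the length of $\sigma$ dominates the length of its shadow on the vertical axis, which in turn is at least the vertical displacement $z$ of the endpoints.

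First I would write the curve in coordinates as $\sigma(t)=(x(t),z(t))$ with $x,z:[0,s]\rightarrow {\mathbb R}$ both of class $C^1$, so that $\sigma(0)=(0,0)$ forces $x(0)=z(0)=0$ and $\sigma(s)=(x,z)$ gives $z(s)=z$. Recalling the length formula for a $C^1$ curve,
\[
{\mathcal L}(\sigma)=\int_0^s \|\sigma'(t)\|\,dt=\int_0^s \sqrt{x'(t)^2+z'(t)^2}\,dt,
\]
I would bound the integrand pointwise from below by $|z'(t)|\geq z'(t)$, since discarding the non-negative term $x'(t)^2$ under the square root can only decrease its value.

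Integrating this inequality and applying the fundamental theorem of calculus yields
\[
{\mathcal L}(\sigma)\geq \int_0^s z'(t)\,dt = z(s)-z(0)=z-0=z,
\]
which is the desired conclusion. The hypothesis $z\geq 0$ is precisely what lets me replace $|z(s)-z(0)|$ by $z$ with the correct sign; without it the sharp bound would instead read ${\mathcal L}(\sigma)\geq |z|$.

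There is essentially no serious obstacle here: the argument is a one-line estimate under the integral sign. The only point meriting care is that the statement is assumed only for $C^1$ (and not necessarily arc-length parametrised) curves, so I must invoke the general length integral rather than the identity $\|\sigma'\|\equiv 1$; everything else is routine. If a coordinate-free phrasing is preferred, the same content can be packaged by noting that the projection $(a,b)\mapsto b$ is $1$-Lipschitz, whence the image path $t\mapsto z(t)$ has length at most ${\mathcal L}(\sigma)$, while any path on a line joining $0$ to $z$ has length at least $z$.
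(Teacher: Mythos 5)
Your proof is correct. The paper itself offers no proof of this lemma---it is quoted from Lemma 2.9 of the cited reference---but your argument (project onto the $z$-axis, bound $\|\sigma'(t)\|=\sqrt{x'(t)^2+z'(t)^2}\geq z'(t)$ pointwise, and integrate using the fundamental theorem of calculus) is the standard one and is exactly what the statement requires; your side remark that $z\geq 0$ is only needed to make the conclusion read $z$ rather than $|z|$ is also accurate.
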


\begin{lemma}\label{ntp} (cf. Lemma 7.5 in \cite{paperc}) If a $1$-constrained curve $\sigma:[0,s] \rightarrow {\mathbb R}^2$ lies in a unit radius disk $D$, then either $\sigma$ is entirely in $\partial (D)$, or the interior of $\sigma$ is disjoint from $\partial ({D})$.
\end{lemma}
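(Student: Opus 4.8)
The plan is to compare $\sigma$ against the boundary circle $\partial(D)$, a unit circle, through a maximum principle applied to the squared distance to the center of $D$. Let $O$ denote the center of $D$ and set $g(t) = \lVert \sigma(t) - O \rVert^2$. Since $\sigma$ lies in $D$ we have $g(t) \le 1$ on $[0,s]$, with equality precisely when $\sigma(t) \in \partial(D)$. Because $\sigma$ is arc-length parametrised, $\lVert \sigma'(t)\rVert = 1$, so on any $C^2$ fragment
\[
g''(t) = 2\lVert\sigma'(t)\rVert^2 + 2\langle \sigma(t)-O,\, \sigma''(t)\rangle = 2 + 2\langle \sigma(t)-O,\, \sigma''(t)\rangle.
\]
The Cauchy--Schwarz inequality together with the curvature bound $\lVert\sigma''\rVert \le 1$ and $\lVert \sigma - O\rVert = \sqrt{g} \le 1$ gives $\langle \sigma - O,\, \sigma''\rangle \ge -\sqrt{g} \ge -1$, hence $g'' \ge 2 - 2\sqrt{g} \ge 0$ wherever $g \le 1$.

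First I would set $h = 1 - g \ge 0$, so that $h'' \le 0$ and $h$ is concave on each fragment. Suppose some interior parameter $t_0 \in (0,s)$ satisfies $\sigma(t_0) \in \partial(D)$, i.e. $h(t_0) = 0$. Since $h \ge 0$ everywhere this is an interior minimum of $h$, and as $\sigma$ is $C^1$ the function $g$, hence $h$, is $C^1$, so $h'(t_0) = 0$. On the fragment containing $t_0$ concavity forces the graph of $h$ to lie below its tangent line at $t_0$, which is the zero line; combined with $h \ge 0$ this yields $h \equiv 0$ on that fragment. Consequently $\sigma$ coincides with $\partial(D)$ along the whole fragment.

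It then remains to propagate this identity across the finitely many breakpoints of a fragmentation. At a breakpoint $t_i$ shared with an adjacent fragment, the previous step gives $h(t_i) = 0$, and $C^1$ continuity of $\sigma$ (hence of $h$) gives $h'(t_i) = 0$ as a one-sided derivative as well; running the same concavity argument from the endpoint $t_i$ forces $h \equiv 0$ on the neighbouring fragment too. Since the fragmentation is finite, iterating to the left and to the right shows that $\sigma$ lies entirely in $\partial(D)$. Taking the contrapositive yields the stated dichotomy: if $\sigma$ is not contained in $\partial(D)$, then no interior point of $\sigma$ meets $\partial(D)$.

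I expect the main obstacle to be the regularity bookkeeping rather than the geometric idea. Because $\sigma$ is only $C^1$ and piecewise $C^2$, the concavity inequality is available only fragment by fragment, so one must verify that the vanishing of $h$ and of its derivative transfer cleanly across each breakpoint and that the propagation does not stall at a corner. The tangency condition $h'(t_0)=0$, which comes for free from $C^1$-regularity at an interior maximum of $g$, is precisely what allows the one-sided concavity argument to close up at the breakpoints.
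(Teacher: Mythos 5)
Your argument is correct. Note that the paper itself does not prove this lemma: it imports it from Lemma 7.5 of the cited reference (Ayala--Rubinstein, \emph{Non-uniqueness of the homotopy class of bounded curvature paths}), so there is no in-text proof to compare against. Your maximum-principle computation is a clean, self-contained substitute: with $g(t)=\lVert\sigma(t)-O\rVert^2$, arc-length parametrisation gives $g''=2+2\langle\sigma-O,\sigma''\rangle\ge 2-2\sqrt{g}\ge 0$ on the region $g\le 1$, so $h=1-g$ is nonnegative and piecewise concave; an interior zero of $h$ is an interior minimum, hence (by $C^1$-regularity of $\sigma$, which makes $h'$ continuous across breakpoints) a point where $h$ lies below the zero tangent line, forcing $h\equiv 0$ on its fragment, and the vanishing of $h$ and $h'$ propagates across the finitely many breakpoints of the piecewise-$C^2$ structure. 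The two places that needed care --- that the below-tangent-line inequality for a concave function is valid from a one-sided derivative at a fragment endpoint, and that $h'$ (unlike $h''$) is globally continuous because it only involves $\sigma'$ --- are both handled correctly, and the contrapositive gives exactly the stated dichotomy, with the endpoints $\sigma(0),\sigma(s)$ correctly exempted. This is at least as rigorous as a geometric tangency argument and has the advantage of working uniformly for the $C^1$, piecewise-$C^2$ regularity class the paper actually uses.
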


\section{A Fundamental Lemma}

We would like to emphasise that a $\kappa$-constrained plane curve has absolute curvature bounded above (when defined) by a positive constant $\kappa=\frac{1}{r}$ where $r>0$ is the minimum radius of curvature.

\begin{lemma}\label{r1} A $\kappa$-constrained plane curve $\sigma: I \rightarrow {\mathcal B}$ where,
$${\mathcal B}=\{(x,y)\in{\mathbb R}^2\,|\, -r<x<r\,\,,\,\,y\geq 0 \}$$
cannot satisfy both:
\begin{itemize}
\item $\sigma(0),\sigma(s)$ are points on the $x$-axis;
\item If $C$ is a radius $r$ circle with centre on the negative $y$-axis and $\sigma(0),\sigma(s)\in C$, then some point in ${Im}(\sigma)$ lies above $C$.
\end{itemize}
\end{lemma}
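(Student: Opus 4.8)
The plan is to argue by contradiction, exploiting the fact that the strip $\mathcal B$ has width exactly $2r$. First I would normalise coordinates so that the two endpoints are $\sigma(0)=(-a,0)$ and $\sigma(s)=(a,0)$ with $0<a<r$; then $C$ is the unique radius $r$ circle through them centred at $O=(0,-c)$ with $c=\sqrt{r^2-a^2}\in(0,r)$. The key elementary observation is that, since the walls $x=\pm r$ are a distance $2r$ apart, the radius $r$ disks contained in $\overline{\mathcal B}$ are \emph{precisely} the disks $D_k$ with centre $(0,k)$ (each tangent to both walls at $(\pm r,k)$); here $C=\partial D_{-c}$, and the reflection of $C$ across the $x$-axis is $C'=\partial D_{c}$, which also passes through both endpoints. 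Saying that a point of $\sigma$ lies above $C$ is exactly saying that $\sigma$ leaves the disk $D_{-c}$ through its upper arc. I assume this for contradiction.

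The heart of the argument is a rolling-circle step. The idea is to slide a wall-tangent disk down onto $\sigma$: consider $S=\{k:\sigma\subseteq D_k\}$ and let $k^{\ast}=\min S$. Since $\sigma$ leaves $D_{-c}$ we have $-c\notin S$, so $k^{\ast}>-c$; and since the endpoints lie on the $x$-axis, membership in $D_k$ forces $|k|\le c$, so $k^{\ast}\in(-c,c]$. By minimality and compactness $\sigma$ meets $C_{k^{\ast}}=\partial D_{k^{\ast}}$ at a point $p^{\ast}$ which, being the highest point of $\sigma$ on $C_{k^{\ast}}$, is an \emph{interior} point of $\sigma$: for $k^{\ast}\in(-c,c)$ the endpoints are strictly inside $D_{k^{\ast}}$, and in the boundary case $k^{\ast}=c$ the endpoints do not bind (they remain inside $D_k$ for $k$ slightly below $c$), so the obstruction to further lowering is again an interior point. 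Now Lemma \ref{ntp}, applied to the radius $r$ disk $D_{k^{\ast}}$ containing $\sigma$ and touching $C_{k^{\ast}}$ at the interior point $p^{\ast}$, forces $\sigma\subseteq C_{k^{\ast}}$. This yields the contradiction: if $k^{\ast}\in(-c,c)$ the endpoints cannot lie on $C_{k^{\ast}}$ (that would force $|k^{\ast}|=c$); and if $k^{\ast}=c$ then $\sigma$ is an arc of $C'$ joining $(-a,0)$ to $(a,0)$ and lying above $C$, hence the major arc, which passes through the tangency points $(\pm r,c)$ and so meets the walls $x=\pm r$, contradicting $\sigma(I)\subseteq\mathcal B$.

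The step I expect to be the main obstacle is the confinement claim $S\neq\varnothing$, i.e. that $\sigma$ is contained in at least one wall-tangent disk $D_k$. This can fail for a careless curve: the open strip also contains the low near-wall corners, and points close to $(\pm r,0)$ lie outside every $D_k$ that reaches high enough to cover a point above $C$, so a priori $\sigma$ might climb above $C$ and also spread toward a corner, with no single $D_k$ containing it. Ruling this out is exactly where the hypotheses that the strip has width $2r$ and that $\|\sigma''\|\le 1/r$ are used decisively: a curvature-$\le 1/r$ curve needs horizontal room $2r$ to reverse its vertical direction, so it cannot both turn back over the top and reach out toward a wall without actually meeting it. I would make this quantitative using Lemma \ref{rad} (bounding arc length below by the angle swept around the centre of a wall-tangent circle) together with Lemma \ref{seg}, to show that any curve realising both behaviours is forced to touch a wall, again contradicting $\sigma(I)\subseteq\mathcal B$. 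Once this confinement is secured, the rolling-circle argument above closes the proof.
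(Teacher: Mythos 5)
Your strategy is the same as the paper's: translate the radius-$r$ circle $C$ vertically to an extremal position at which it supports $\sigma$, and then invoke Lemma \ref{ntp} at the contact point. The paper's proof does exactly this (``Consider a continuous one-parameter family of circles $C_u$ obtained by translating $C$ along the $y$-axis\dots''), and your treatment of the extremal circle is in fact more careful than the paper's: you check that the contact point obstructing the sliding is an interior point of $\sigma$ (the endpoints, sitting on the lower half of every admissible $D_k$, never bind), and you dispose of the degenerate case $k^{\ast}=c$ via the wall points $(\pm r,c)$. That part of the argument is correct.

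The step you flag as the main obstacle is, however, a genuine gap in your write-up, and the paper does not close it either: it simply asserts ``by continuity there exists a $v\geq 0$ such that $\sigma$ lies inside $C_v$''. As you observe, compactness alone does not give $S\neq\varnothing$: a compact curve in the open band can hug a corner near $(\pm r,0)$ while also reaching height close to $r$, and then no single $D_k$ contains it; ruling this out for $\kappa$-constrained curves is left as a sketch, and it is not clear that the two length estimates of Lemmas \ref{rad} and \ref{seg} alone deliver it. The good news is that global containment is not needed. Replace ``$\sigma\subseteq D_k$'' by the weaker, always achievable condition ``no point of $\sigma$ lies strictly above $C_k$'': maximise over the image of $\sigma$ the continuous function $(x_0,y_0)\mapsto y_0+c-\sqrt{r^2-x_0^2}$, whose value at a point is the unique $u$ for which that point lies on the upper arc of $C_u$. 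The maximum $v$ is positive (some point lies above $C_0=C$) and is attained at an interior point $q$ of $\sigma$, since the endpoints give the value $0$. A sufficiently short subarc of $\sigma$ through $q$ \emph{does} lie in the disk $D_v$, because near a point of the open upper arc of $C_v$ the conditions ``in the band'' and ``not above $C_v$'' together imply ``in $D_v$''; so Lemma \ref{ntp} applied to that subarc forces it into $C_v$. Extending to the maximal subarc of $\sigma$ contained in $C_v$ (whose image must stay on the open upper arc, since $(\pm r,-c+v)$ lie on the excluded walls) forces all of $\sigma$ into $C_v$, and then your endpoint/wall contradiction applies verbatim. This localisation is exactly what the paper's parenthetical ``(nearby the point of tangency)'' is gesturing at, and adopting it lets you delete the confinement step entirely.
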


\begin{proof} Suppose such a curve exists. Let $h:{Im}(\sigma)\rightarrow {\mathbb R}_{\geq0}$ be the projection onto the $y$-axis. Since $\mbox{\it Im}(\sigma)$ is compact and $h$ is continuous, there exists $p\in I$ such that,
$$h(\sigma(p))=\max_{t\in I}{h(\sigma(t))}.$$
Consider a continuous one-parameter family of circles $C_u$ obtained by translating $C$ along the $y$-axis by $u\geq 0$. Note that by continuity there exists a $v\geq 0$ such that $\sigma$ lies inside $C_{v}$ and is tangent to $C_{v}$ at some point. By viewing $C_{v}$ as $\partial (D)$ in Lemma \ref{ntp} (nearby the point of tangency) we immediately obtain a contradiction.
\end{proof}

{ \begin{figure} [[htbp]
 \begin{center}
\includegraphics[width=1\textwidth,angle=0]{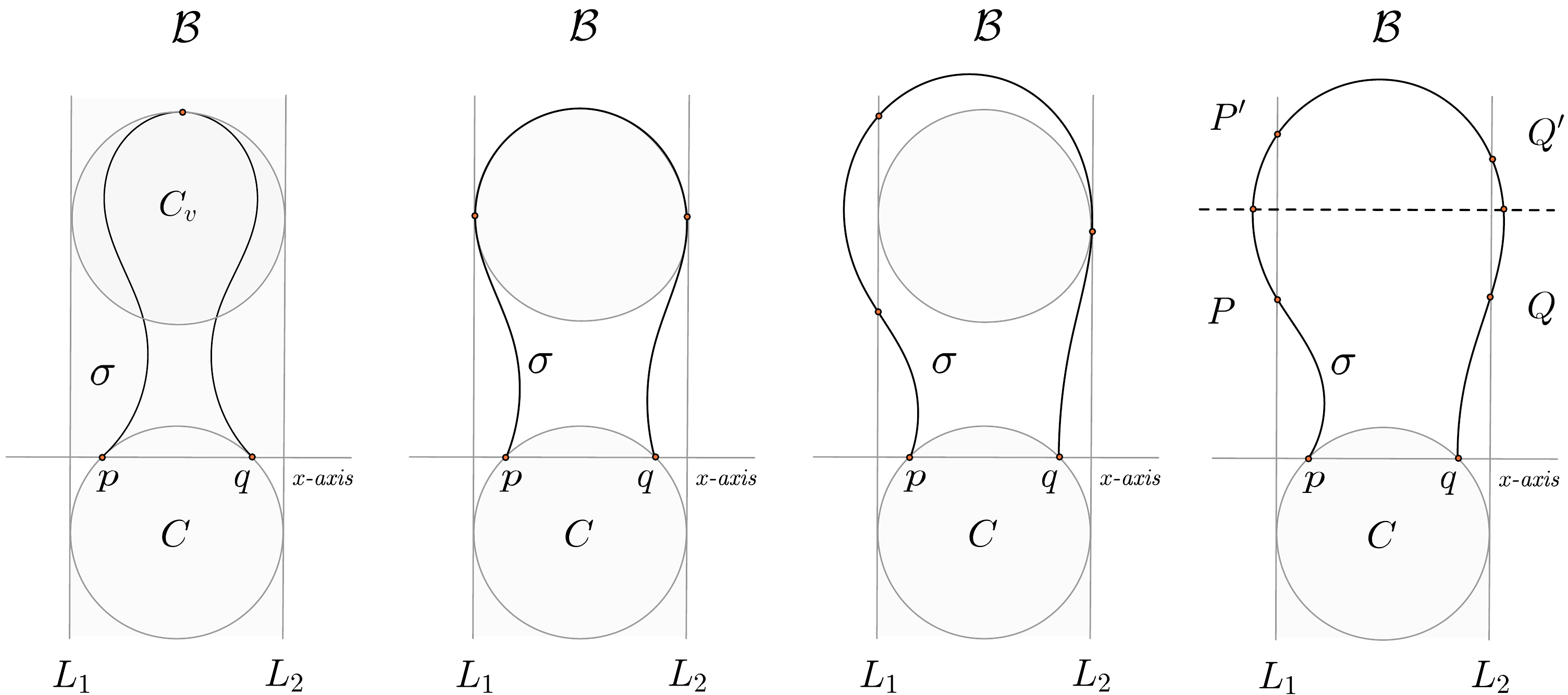}
\end{center}
\caption{An illustration of Corollary \ref{crossect}. Here $p$ and $q$ represent $\sigma(0)$ and ${\sigma(s)}$. We obtain the fourth illustration from the third one by clockwise rotating the band ${\mathcal B}$ with point of rotation the centre of $C$. In this fashion we obtain a pair parallel tangents. The dashed trace at the right corresponds to a cross section for $\sigma$.}
\label{figband}
\end{figure}}

\begin{definition} A plane curve $\sigma$ has {\it parallel tangents} if there exist $t_1,t_2 \in I$, with $t_1<t_2$, such that $\sigma'(t_1)$ and $\sigma'(t_2)$ are parallel and pointing in opposite directions.
\end{definition}

\begin{definition}\label{defncross} Let $L_1$ and $L_2$ be the lines $x=-r$ and $x=r$ respectively.  A line joining two points in $\sigma$ distant apart at least $2r$ one to the left of $L_1$ and the other to the right of $L_2$ is called a cross section (see Figure \ref{figband} right).
\end{definition}

Next result gives conditions for the existence of parallel tangents.

\begin{corollary} \label{crossect} Suppose a $\kappa$-constrained plane curve $\sigma: I \rightarrow {\mathbb R}^2$ satisfies:
\begin{itemize}
\item $\sigma(0),\sigma(s)$ are points on the $x$-axis.
\item If $C$ is a radius $r$ circle with centre on the negative $y$-axis, and $\sigma(0),\sigma(s)\in C$, then some point in ${Im}(\sigma)$ lies above $C$.
\end{itemize}
Then $\sigma$ admits parallel tangents and therefore a cross section.
\end{corollary}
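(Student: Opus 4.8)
The plan is to read Corollary~\ref{crossect} as the contrapositive of Lemma~\ref{r1} combined with a rotation of the band about the centre of $C$, as in Figure~\ref{figband}. I would first normalise coordinates so that $C$ has centre $O=(0,-c)$ and radius $r$; because its centre lies on the $y$-axis the two endpoints are forced to be symmetric, $\sigma(0),\sigma(s)=(\pm a,0)$ with $a=\sqrt{r^2-c^2}$, and $C$ is inscribed in the strip $\{-r\le x\le r\}$, tangent to $L_1$ and $L_2$. The content I extract from Lemma~\ref{r1} is extremal: the upper arc of $C$ is the highest a $\kappa$-constrained curve joining $(\pm a,0)$ can reach while staying inside the strip, so the second hypothesis (a point of $\sigma$ lies above $C$) is incompatible with $\mathrm{Im}(\sigma)\subset\mathcal B$.

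For the parallel tangents I would use the reformulation that $\sigma$ has a pair of oppositely directed parallel tangents if and only if its continuous tangent-angle sweeps an interval of length at least $\pi$, equivalently if and only if $\sigma$ is not strictly monotone in any direction $u$ (there is no $u$ with $\langle\sigma(t),u\rangle$ strictly increasing). So I would argue by contradiction: assuming monotonicity in some $u$ exhibits $\sigma$ as a graph in the $u$-direction with both ends on $C$, and I would rotate the band clockwise about $O$ to bring $u$ into the horizontal position (the third-to-fourth passage in Figure~\ref{figband}); the extremal comparison above then pins this graph below the relevant arc of $C$, contradicting the hypothesis that $\sigma$ rises above $C$. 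This produces $t_1<t_2$ with $\sigma'(t_1)=-\sigma'(t_2)$.

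Once the antipodal tangents are in hand, the cross section comes from placing the band in the position whose two boundary lines carry these opposite tangents: the lines are parallel and, being tangent to the inscribed circle $C$, lie exactly $2r$ apart, so $\sigma$ meets each of them beyond the corresponding boundary and the chord between the two contact points is a cross section in the sense of Definition~\ref{defncross}. I expect the decisive obstacle to be the exclusion of the monotone case in every direction at once: Lemma~\ref{r1} is stated only for curves confined to $y\ge 0$, so I must first upgrade its extremal conclusion to curves that are allowed to dip below the $x$-axis (re-running the sliding-circle argument of its proof together with Lemma~\ref{ntp}), and then control how the comparison degrades as the monotonicity direction $u$ rotates away from the horizontal, so that no tilted direction escapes the bound. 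Verifying that the two boundary contacts are genuinely oppositely oriented, rather than co-oriented as for a monotone graph, is the same difficulty seen from the other side.
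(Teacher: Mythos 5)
Your proposal takes a genuinely different route from the paper (a global tangent--angle/monotone--direction argument rather than the paper's local one), but as written it has two real gaps. First, the cross section. You place ``the band in the position whose two boundary lines carry these opposite tangents'' and assert these lines are tangent to $C$ and hence $2r$ apart. That conflates the tangent lines of $\sigma$ at the parameters $t_1,t_2$ with the boundary lines $L_1,L_2$ of the band: the former are tangent to $\sigma$, not to $C$, they need not be $2r$ apart, and the contact points $\sigma(t_1),\sigma(t_2)$ need not be at distance $\geq 2r$, which Definition~\ref{defncross} requires. In the paper the cross section does not come from the parallel tangents at all; it comes from the separate fact that $\sigma$ has a point strictly to the left of $L_1$ and a point strictly to the right of $L_2$ (the fixed, $2r$-apart lines tangent to $C$), so that two such points are automatically more than $2r$ apart. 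Your argument never establishes that $\sigma$ exits the band on \emph{both} sides, so this step fails.

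Second, the monotone case. If $\sigma$ is monotone in a tilted direction $u$, it is confined to the strip bounded by the lines through $x$ and $y$ perpendicular to $u$ --- a strip of width $|\langle y-x,u\rangle|\leq d<2r$ whose boundary lines are \emph{not} $L_1,L_2$, whose inscribed radius-$r$ circle is not centred on the perpendicular bisector of the chord in the configuration of Lemma~\ref{r1}, and in which the endpoints no longer sit symmetrically on the ``$x$-axis'' of the rotated frame. So Lemma~\ref{r1} does not apply; what you need is a strictly stronger statement (endpoints in general position on the inscribed circle, curve permitted on both sides of the chord, band tilted relative to the chord). You flag this as a technicality to ``control,'' but it is the mathematical heart of your argument and would have to be proved by re-running the sliding-circle/Lemma~\ref{ntp} argument in that generality. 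By contrast, the paper sidesteps all of this: it only uses Lemma~\ref{r1} to conclude that $\sigma$ leaves $\mathcal B$, then rotates $L_1,L_2$ slightly so each is crossed twice, and applies the intermediate value theorem to the tangent direction between the two crossings of $L_1$ (and of $L_2$) to produce tangents parallel to those lines, fixing the orientations by taking the first-exit and last-exit sub-arcs. If you want to salvage your approach, the tilted-band comparison lemma must be stated and proved, and the two-sided exit from the band must be established independently of the parallel tangents.
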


\begin{proof} Consider a $\kappa$-constrained curve $\sigma$ satisfying the hypothesis given in the statement. By virtue of Lemma \ref{r1}, the curve $\sigma$ cannot be entirely contained in the band $\mathcal B$ (see Figure \ref{figband} left). It is not hard to see that if $\sigma$ is tangent to $L_1$ and $L_2$ from the inside of $\mathcal B$ then, a pair of parallel tangents is immediately obtained (see second illustration in Figure \ref{figband}). Suppose $\sigma$ is tangent to $L_2$ and crosses $L_1$ twice (see third illustration in Figure \ref{figband}). By rotating counterclockwise the parallel lines $L_1$ and $L_2$ simultaneously (a sufficiently small angle) these parallel lines cut $\sigma$ in at least in two points each line. Suppose $\sigma$ intersects $L_1$ at $P$ and $P'$, and that $\sigma$ intersects $L_2$ at $Q$ and $Q'$ (see Figure \ref{figband} right). Since $\sigma$ is $C^1$, then by applying the intermediate value theorem for the derivatives to $\sigma$ between $P$ and $P'$ and between $Q$ and $Q'$ we conclude the existence of parallel tangents. We ensure that the directions of the parallel vectors are of opposite sign by considering the sub arcs of $\sigma$ between the first time it leaves ${\mathcal B}$ and the first time it reenters $\mathcal B$ and then considering $\sigma$ between the last time it leaves ${\mathcal B}$ and the last time $\sigma$ reenters $\mathcal B$. Since $\sigma$ has a point to the left of $L_1$ and a point to the right of $L_2$, there exists a cross section, concluding the proof. 
\end{proof}

In general, it is not an easy task to construct $\kappa$-constrained homotopies between two given curves (see \cite{paperd}). In Proposition \ref{partan} we will see that the existence of parallel tangents leads to a method for constructing $\kappa$-constrained homotopies.

\begin{definition}Let $\sigma$ be a $C^1$ curve. The affine line generated by $\langle  \sigma'(t)\rangle$ is called the tangent line at $\sigma(t)$, $t\in I$. The ray containing $\sigma'(t)$ is called the positive ray.
\end{definition}

The next definition can be easily adapted for arc length-parametrised curves, we leave the details to the reader.

\begin{definition} \label{conc} Suppose that  $\sigma, \gamma: [0,1]\rightarrow {\mathbb R}^2$ with $\sigma(1)=\gamma(0)$. The concatenation of $\sigma$ and $\gamma $ is denoted by $\gamma\, \#\, \sigma$ and is defined to be,

\[ ({ \gamma\,\#\, \sigma})(t) = \left\{ \begin{array}{ll}
        {\sigma}(2t)\,\,\,\,\,\,\,\,\,\,\,\,\,\,\,\, 0\leq t\leq \frac{1}{2}\\
       {\gamma}(2t-1)\,\,\,\,\,\, \frac{1}{2}<t \leq 1.\end{array} \right. \]
\end{definition}

\begin{remark}\label{train} {\it(Train track displacement)}. The next result gives a direct method for obtaining $\kappa$-constrained homotopies. Suppose a $\kappa$-constrained curve $\sigma$ has parallel tangents at $t_1,t_2 \in I$. The tangent lines at $\sigma(t_1)$ and $\sigma(t_2)$ may work as {\it train tracks} for the displacement of the portion of $\sigma$ in between $\sigma(t_1)$ and $\sigma(t_2)$ (see Figure \ref{figpartan}). 
\end{remark}

\begin{proposition}\label{partan} The train track displacement obtained by the existence of parallel tangents in Remark \ref{train} induces a $\kappa$-constrained homotopy.
\end{proposition}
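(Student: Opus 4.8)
The plan is to exhibit the train track displacement as an explicit one-parameter family and then verify the three requirements of Definition \ref{homcbc}: that each member lies in $\Sigma(x,y)$, that the family is continuous, and that it connects $\sigma$ to the displaced curve. After a rigid motion (which preserves curvature) I would arrange that the common direction of the two tangent lines is horizontal, with $\sigma'(t_1)=\mathbf u$ and $\sigma'(t_2)=-\mathbf u$ for $\mathbf u=(1,0)$; this is exactly the hypothesis that $\sigma$ has parallel tangents pointing in opposite directions. I then decompose $\sigma$ into three consecutive arcs, the head $A=\sigma|_{[0,t_1]}$, the middle $M=\sigma|_{[t_1,t_2]}$, and the tail $B=\sigma|_{[t_2,s]}$, noting that $A$ arrives at $\sigma(t_1)$ in direction $\mathbf u$ and $B$ departs $\sigma(t_2)$ in direction $-\mathbf u$.

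For a continuous nondecreasing displacement $\delta\colon[0,1]\to[0,\infty)$ with $\delta(0)=0$, I would define $H(p)$ to be the curve that traverses, in the sense of Definition \ref{conc} and in this order: the head $A$; the straight segment $S_1(p)$ from $\sigma(t_1)$ to $\sigma(t_1)+\delta(p)\,\mathbf u$ along the first tangent line; the translated middle
\[
  M_p := M + \delta(p)\,\mathbf u;
\]
the straight segment $S_2(p)$ from $\sigma(t_2)+\delta(p)\,\mathbf u$ back to $\sigma(t_2)$ along the second tangent line; and finally the tail $B$. Geometrically this slides $M$ forward along the two parallel tangent lines, which serve as the \emph{train tracks} of Remark \ref{train}, while $A$ and $B$ stay fixed. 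The bulk of the argument is verifying that $H(p)\in\Sigma(x,y)$ for every $p$, and here the opposite-direction tangent hypothesis does the essential work. At the junction $\sigma(t_1)$ the head leaves in direction $\mathbf u$ and $S_1(p)$ departs in the same direction; at the far end of $S_1(p)$ the arc $M_p$ departs in direction $\sigma'(t_1)=\mathbf u$, since translation preserves tangents; symmetrically $M_p$ arrives at $\sigma(t_2)+\delta(p)\,\mathbf u$ in direction $\sigma'(t_2)=-\mathbf u$, matching $S_2(p)$ and then $B$. Hence all four junctions are $C^1$, so $H(p)$ is $C^1$ and piecewise $C^2$; the inserted segments are straight and contribute zero curvature, while $M_p$ inherits the curvature profile of $M\subset\sigma$, bounded by $\kappa$. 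Since $A$ and $B$ are never moved the endpoints stay $x$ and $y$, and $H(0)=\sigma$ because $S_1(0)$ and $S_2(0)$ degenerate to points.

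The remaining step, and the point I expect to be the genuine obstacle, is continuity of $p\mapsto H(p)$ for the $C^1$ metric after the arc-length reparametrisation described in the Preliminaries. The arc-length of $H(p)$ is $s_p=s+2\delta(p)$, so the curves live on the varying domains $[0,s_p]$ and must be compared as reparametrised unit-speed curves rather than as the raw concatenations; one must check that the position and the unit tangent of the reparametrisation of $H(p)$ vary continuously in $p$, uniformly in the curve parameter, and in particular that no jump is introduced at $p=0$ where the two inserted segments collapse. Since the only $p$-dependence enters through the rigid translation by $\delta(p)\,\mathbf u$ and through the lengths of two straight segments, each dependence is Lipschitz in $\delta(p)$, so once the reparametrisation is written out explicitly the required $C^1$ continuity follows. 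Together with the previous paragraph this shows the train track displacement is a $\kappa$-constrained homotopy.
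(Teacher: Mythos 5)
Your construction is exactly the paper's: the head/middle/tail decomposition with two inserted collinear segments along the tangent lines is precisely the concatenation $\sigma_2\,\#\,\phi_r\,\#\,\sigma_1$ with $\phi_r=\psi_3|_{[0,r]}\,\#\,\tilde\gamma_r\,\#\,\psi_1|_{[0,r]}$ used in the paper's proof, and your opposite-direction bookkeeping matches the paper's choice of $P$ on the positive ray at $t_1$ and $Q$ on the negative ray at $t_2$. The proposal is correct and in fact spells out the $C^1$ junction and reparametrisation-continuity checks that the paper leaves implicit.
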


\begin{proof}Suppose $\sigma$ is a $\kappa$-constrained curve having parallel tangents at parameters $t_1$ and $t_2$. Consider the restriction $\tilde{\sigma}:[t_1,t_2]\rightarrow {\mathbb R}^2$. Subdivide $\sigma$ in such a way that $\sigma=\sigma_2\,\# \, \tilde{\sigma}\, \# \,\sigma_1$. Consider the parametrised lines $\psi_1,\psi_2: [0,1]\rightarrow {\mathbb R}^2$ defined by,
$$ \psi_1(r)=\tilde{\sigma}(t_1)(1-r)+Pr$$
 $$ \psi_2(r)=\tilde{\sigma}(t_2)(1-r)+Qr$$

 \noindent where $P$ belongs to the positive ray of $\langle  \sigma'(t_1)\rangle$ and $Q$ belongs to the negative ray of $\langle  \sigma'(t_2)\rangle$ with $d(P,\sigma(t_1))=d(Q,\sigma(t_2))$. Call $\tilde{\gamma}_r$ the translation of $\tilde{\sigma}$ obtained by adding the vector from $\tilde{\sigma}(t_1)$ to $ \psi_1(r)$. Define $\phi_r=\psi_3|_{[0,r]} \,\#  \, \tilde{\gamma}_r \,\# \, \psi_1|_{ \tiny{[0,r]}} $ for each $r\in (0,1]$, where $\psi_3(r)= \psi_2(1-r)$. Define the homotopy $\tilde{\mathcal H}(r)=\sigma_2\,\# \, \phi_r \, \# \,\sigma_1$. In this fashion we have that $\tilde{\mathcal H}(0)=\sigma_2 \, \# \, \phi_0 \, \# \, \sigma_1=\sigma$, and $\tilde{\mathcal H}(r)=\sigma_2 \, \# \, \phi_r \, \# \, \sigma_1=\gamma$ are both $\kappa$-constrained curves for $r\in [0,1]$ (after reparametrisation). 
\end{proof}

 The homotopy in Proposition \ref{partan} defines an operation on $\kappa$-constrained curves called operation of type {\sc III} (see Remark \ref{optype}). We say that the curves $\sigma$ and $\gamma$ in the previous result are {\it parallel homotopic}.

 \section{Homotopy classes in spaces of $\kappa$-constrained plane curves}

In order to determine the number of homotopy classes in $\Sigma(x,y)$ we first study the case where the initial and final points of a $\kappa$-constrained curve are different. Then we study closed $\kappa$-constrained curves. As a consequence of the curvature bound we have that if the initial and final points are different we have two scenarios, namely, $0<d<2r$ or $d\geq 2r$ where the minimum radius of curvature is $r=\frac{1}{\kappa}$. Soon we will see that if $0<d < 2r$ there exist two planar regions where no $\kappa$-constrained curve can be defined (see Figure \ref{figexampcurves}). In addition, if $0<d < 2r$ there exists a planar region that {\it traps} $\kappa$-constrained curves. That is, no $\kappa$-constrained curve defined in the trapping region can be made $\kappa$-constrained homotopic to a curve having a point in the complement of the trapping region. In particular, we conclude that these trapped curves correspond to a homotopy class of embedded $\kappa$-constrained curves.

{ \begin{figure}[htbp]
 \begin{center}
\includegraphics[width=.6\textwidth,angle=0]{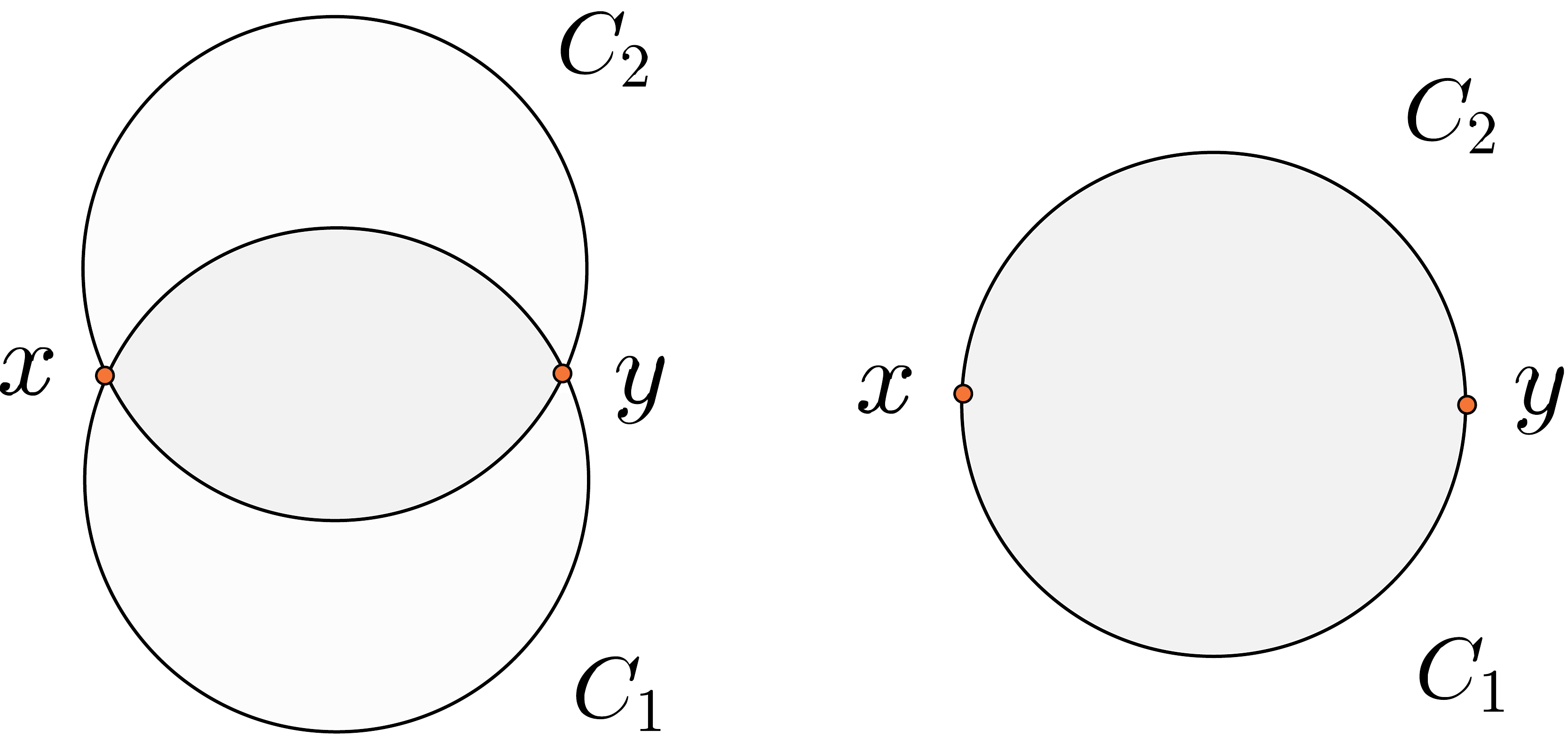}
\end{center}
\caption{Left: An illustration for $0 < d < 2r$. The region shaped like a lens correspond to $\mathcal I$. The union of the lighter shaded regions correspond to $\mathcal E$. Right: An illustration for $d = 2r$.}
\label{figless2r}
\end{figure}}

\begin{definition}\label{leq2r}  Suppose that $0<d<2$. Let $D_1$ and $D_2$ be radius $r$ disks ($C_1=:\partial (D_1)$ and $C_2=:\partial (D_2)$) and $C_1\cap C_2=\{x,y\}$. Set ${\mathcal I} ={int}( D_1\cap D_2)$ and  ${\mathcal U}=D_1\cup D_2$. Then define ${\mathcal E}={ int}({\mathcal U}) \setminus { cl}({\mathcal I})$. Also, denote by $\partial({\mathcal I})$ the union of the shorter circular arcs of $C_1$ and $C_2$ joining $x$ and $y$ (see Figure \ref{figless2r}).

\end{definition}

\begin{remark} When we study properties of $\mathcal I$, $\mathcal E$ or $\mathcal U$ we are implicitly saying that $\Sigma(x,y)$ is such that $x\neq y$ and $d<2r$.
\end{remark}

{ \begin{figure}[htbp]
 \begin{center}
\includegraphics[width=1\textwidth,angle=0]{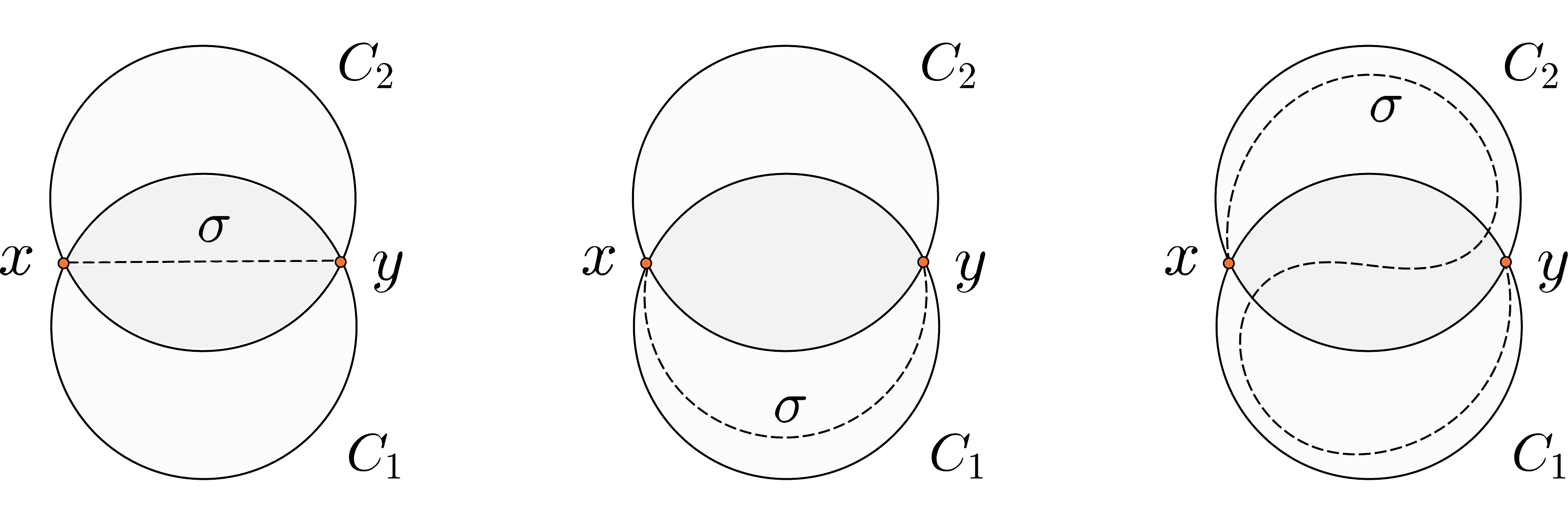}
\end{center}
\caption{Let $C_1$ and $C_2$ be radius $r$ circles. Does $\sigma$ represents a $\kappa$-constrained curve in ${\mathcal I}$, ${\mathcal E}$ or ${\mathcal U}$? (see Figure \ref{figless2r}).}
\label{figexampcurves}
\end{figure}}

\begin{remark} {\it (On piecewise constant curvature $\kappa$-constrained curves)} As seen in \cite{paperd}, constructing explicit $\kappa$-constrained homotopies is not a simple matter. In subsection \ref{redprocess} we will discuss a process applied to $\kappa$-constrained curves called {\it normalisation} (see \cite{papera, paperb, paperd}). The normalisation of a $\kappa$-constrained curve $\sigma$ is a piecewise constant curvature $\kappa$-constrained curve corresponding to a finite number of concatenated pieces called {\it components}. These components are arcs of radius $r$ circles and line segments. The number of components is called the {\it complexity} of the curve. It is important to note that both $\sigma$ and its normalisation are curves in the same connected component in $\Sigma(x,y)$. Our efforts in \cite{papera, paperb, paperd} have been made in order to overcome the difficulty of constructing explicit $\kappa$-constrained homotopies. After normalising $\sigma$ we apply a {\it reduction process}, also described in subsection \ref{redprocess}. This process consists on manipulating piecewise constant curvature $\kappa$-constrained curves while reducing length and complexity and without violating the curvature bound. 
\end{remark}

\begin{definition} A piecewise constant curvature $\kappa$-constrained curve whose circular components lie in radius $r$ circles is called $cs$ curve.
\end{definition}

With the intention of simplifying our arguments, in \cite{paperd}, we defined the so called {\it operations of type} {\sc I} and {\sc II}. The {\it Operations of type} {\sc III} will be defined to be the ones performed under the existence of parallel tangents (see Proposition \ref{partan}). Note that the first two operations are applied to $cs$ curves and the third one may be applied only to $\kappa$-constrained curves with parallel tangents.

\begin{remark}\label{optype} {\it (Operations on $cs$ curves, see \cite{paperd} and Figure \ref{figpartan}})
\begin{itemize} 
\item {\it Operations of type} {\sc I}: In order to perform operations of type {\sc I} we consider a point $z$ in the image of a $cs$ curve as rotation point. We then consider two radius $r$ disks (pushing disks) both tangent to the $cs$ curve at $z$. Once the rotation point is chosen, we twist the initial $cs$ curve along the boundary of the two pushing disks in a clockwise or counterclockwise fashion\footnote{For a full rotation see Figure 3 in \cite{paperd}.}.

\item An example of an {\it operation of type} {\sc II} is illustrated in Figure \ref{figpartan} upper-right; for an in-depth description, refer to \cite{paperd}.

\item {\it Operations of type} {\sc III}: These operations are defined to be the ones performed under the existence of parallel tangents (see Proposition \ref{partan}).
\end{itemize}
\end{remark}

{ \begin{figure}[htbp]
 \begin{center}
\includegraphics[width=1\textwidth,angle=0]{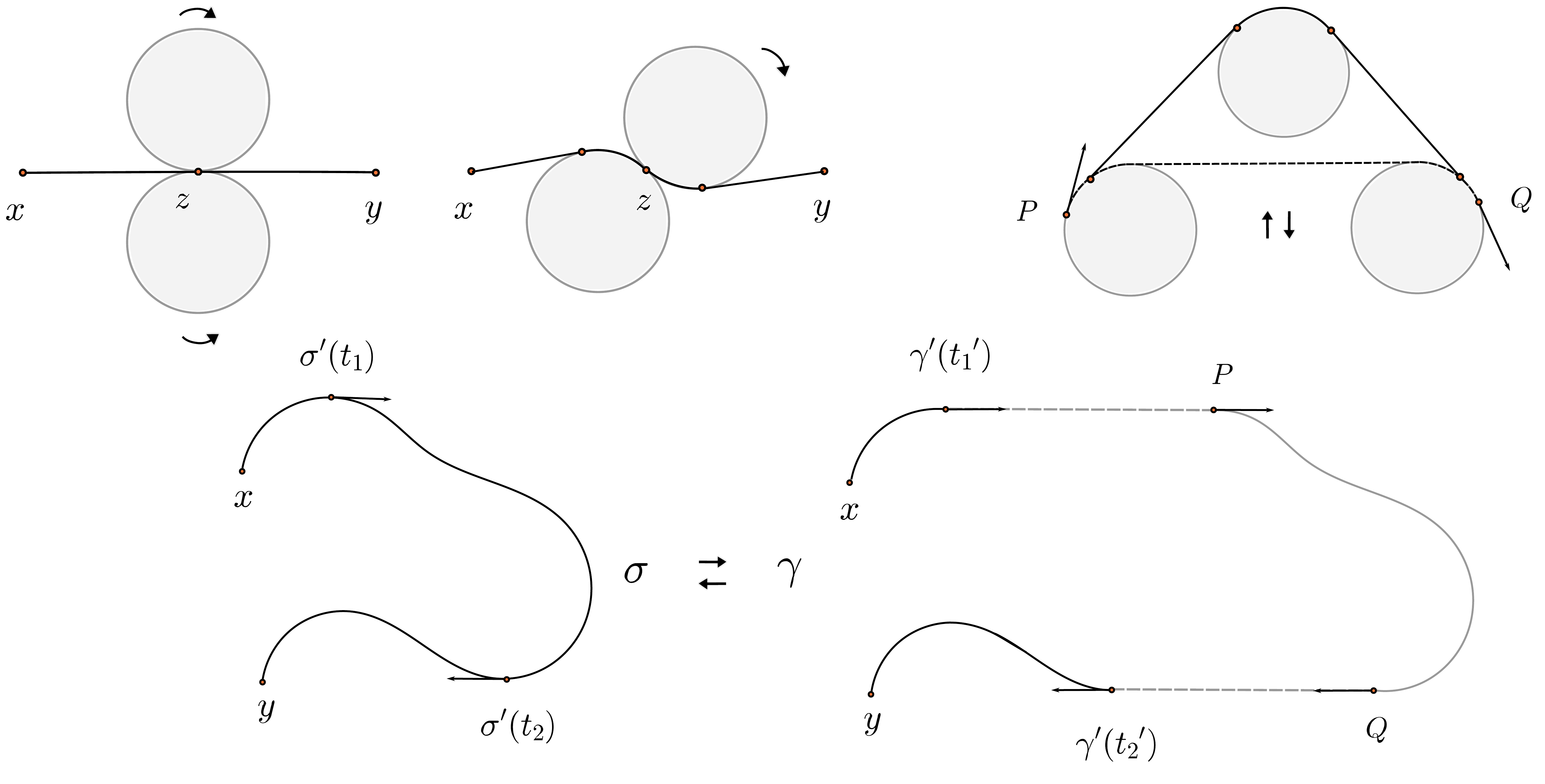}
\end{center}
\caption{Upper-left: An Illustration of an operation of type {\sc I} under a clockwise rotation. Upper-right: An Illustration of an operation of type {\sc II}. Lower: An Illustration of a train track displacement (an operation of type {\sc III}). Note that $\sigma$ has parallel tangents. In addition, observe that $\sigma'(t_1)=\gamma'(t_1')$ and $\sigma'(t_2)=\gamma'(t_2')$ for some parameters $t_1'$ and $t_2'$.}
\label{figpartan}
\end{figure}}



Next, we establish that the larger circular arcs connecting $x$ and $y$ in $C_1$ and $C_2$ (see Figure \ref{figless2r} left) can be deformed one to another without violating the prescribed curvature bound\footnote{An analogous construction was observed by N. Kuiper to L. E. Dubins in \cite{dubins 2} page 480 (proof not added in the manuscript).}.

\begin{proposition}\label{homotarcmov}Suppose that $0<d<2r$. The larger circular arcs in $C_1$ and $C_2$ joining $x$ and $y$ are $\kappa$-constrained homotopic in $\mathbb R^2$.
\end{proposition}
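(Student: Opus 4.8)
The plan is to exhibit an explicit $\kappa$-constrained homotopy carrying the larger arc $A_1\subset C_1$ to the larger arc $A_2\subset C_2$, using the train track displacement of Proposition \ref{partan} as the main engine. First I would fix coordinates so that $x=(0,a)$ and $y=(0,-a)$ with $a=d/2$, placing the centres of $C_1$ and $C_2$ at $(-c,0)$ and $(c,0)$ with $c=\sqrt{r^2-a^2}>0$. Since $d<2r$, the chord $\overline{xy}$ subtends a central angle $\theta_S<\pi$, so each larger arc subtends $\theta_L=2\pi-\theta_S>\pi$. The key preliminary observation is that a circular arc longer than a semicircle always contains a pair of diametrically opposite points: the larger arc and its own image under the antipodal map of $C_1$ are two arcs of angular measure $\theta_L>\pi$, so their total measure exceeds $2\pi$ and they must overlap; any point in the overlap together with its antipode gives two points of $A_1$ whose oppositely oriented tangent lines are parallel and a distance $2r$ apart. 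Thus $A_1$ has parallel tangents in the sense of the relevant definition, with the enclosed core being exactly a semicircle of radius $r$; the same holds for $A_2$.

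With parallel tangents in hand, Proposition \ref{partan} applies and provides train tracks (the two parallel tangent lines) along which the semicircular core may be translated while the two terminal sub-arcs stay pinned, the endpoints $x,y$ remaining fixed throughout and the curvature bound never being violated. The idea is to translate the core of $A_1$ across the strip toward $C_2$ and, symmetrically, that of $A_2$ toward $C_1$, bringing both arcs close to a common \emph{normal form}: a radius-$r$ semicircular cap joined to two parallel straight segments (a capsule-type end) together with the fixed terminal sub-arcs. Further type-III displacements and a type-{\sc I} twist (Remark \ref{optype})---available because $d<2r$ leaves room between the endpoints---would then be used to absorb the terminal sub-arcs and to flip the opening of the cap so that the images of $A_1$ and $A_2$ agree. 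Because every move is one of the operations of Remark \ref{optype} or of Proposition \ref{partan}, each intermediate curve automatically lies in $\Sigma(x,y)$, as required by Definition \ref{homcbc}, and concatenating the two displacements with the flip yields the desired homotopy.

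The main obstacle is precisely that one cannot shortcut this construction by \emph{meeting in the middle}. Reflection $R$ in the line through $x$ and $y$ is an isometry fixing $x,y$ and interchanging $C_1\leftrightarrow C_2$ and $A_1\leftrightarrow A_2$, which makes the problem symmetric; however, since $R$ fixes both endpoints it must preserve the orientation of any $R$-invariant arc from $x$ to $y$, forcing such an arc to lie on $\overline{xy}$---the straight segment, which belongs to the \emph{other} homotopy class. Hence there is no symmetric $\kappa$-constrained curve available to serve as the midpoint of the homotopy, and the deformation is obliged to thread through genuinely asymmetric curves. The delicate part is therefore the bookkeeping that organises the finitely many type-III translations and the type-{\sc I} twist into a single continuous family: one must check that the moving core never forces a loss of the $C^1$ or immersion condition, that the terminal sub-arcs reconnect to the translated core with matching tangents, and that the endpoints stay fixed at $x$ and $y$ at every stage. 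Each of these is guaranteed \emph{locally} by Proposition \ref{partan} and Remark \ref{optype}; the remaining content of the proof is to chain them globally into a homotopy $A_1\rightsquigarrow A_2$.
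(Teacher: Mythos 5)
Your opening is fine: placing the centres symmetrically, observing that each larger arc exceeds a semicircle and hence contains an antipodal pair, so Proposition \ref{partan} applies --- this matches the paper's first move (steps 1--2 of its eight-step deformation). The gap comes immediately afterwards. The whole content of the proposition is the transfer of the arc from one side of the chord $\overline{xy}$ to the other, and a type-{\sc III} displacement cannot do this by itself: it only translates the semicircular core along the two parallel tangent lines, so the cap keeps opening toward the same side of the chord. You acknowledge that a ``flip'' is needed and assign it to ``a type-{\sc I} twist \dots used to absorb the terminal sub-arcs and to flip the opening of the cap,'' but that sentence is exactly the theorem and is never carried out. In the paper this is the heart of the argument: after the first type-{\sc III} move one performs a type-{\sc I} operation with rotation point the endpoint $x$ itself, rotating the pushing disk until it coincides with $C_1$; it is this specific choice of rotation point (an endpoint, so the endpoint constraint is preserved for free) that moves the curve across the chord. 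Without specifying where the twist is based and why the endpoints, the $C^1$ condition and the curvature bound survive it, the proposal does not prove the statement.

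Second, your paragraph arguing that one \emph{cannot} meet in the middle is both incorrect and points away from the actual proof. Your claim that an $R$-invariant arc from $x$ to $y$ must lie on $\overline{xy}$ relies on a setwise self-homeomorphism of the image fixing both endpoints being pointwise the identity; this holds only for \emph{embedded} arcs. The intermediate curves in this homotopy class need not be embedded, and a non-simple curve can have $R$-invariant image without lying on the line (for instance a loop centred on the line concatenated with a segment of the line). The paper's proof does precisely what you rule out: after type {\sc III}, type {\sc I} (about $x$), type {\sc III} and type {\sc II}, it arrives at a curve symmetric under the reflection interchanging $C_1$ and $C_2$, and concludes by noting that the mirror-image sequence of operations applied to the other arc reaches the same symmetric curve, so transitivity finishes the argument.
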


\begin{proof} We label each step in Figure \ref{fighomotarcmov} from left to right starting with 1 and finishing at 8. In step 1 we start with $C_2$. Since $0<d<2r$ we have that the length of $C_2$ is greater than $\pi r$, and in consequence, we have that $C_2$ has parallel tangents. In step 2 we applied Proposition \ref{partan} by performing an operation of type {\sc III}. In step 3 we consider the rotation point $z=x$ and start a clockwise operation of type {\sc I}. In steps 4 and 5 we rotate the pushing disk so that it coincides with $C_1$ in step 6. Since $C_1$ also has parallel tangents we apply an operation of type {\sc III} in step 6 to obtain the curve in step 7. We apply an operation of type {\sc II} to the curve in step 7 to obtain the curve in step 8. Since the curve in step 8 is symmetrical we can obtain the curve in step 8 by applying the same operations as before starting from $C_1$ in place of $C_2$ concluding the proof.
\end{proof}

{ \begin{figure}[htbp]
 \begin{center}
\includegraphics[width=1\textwidth,angle=0]{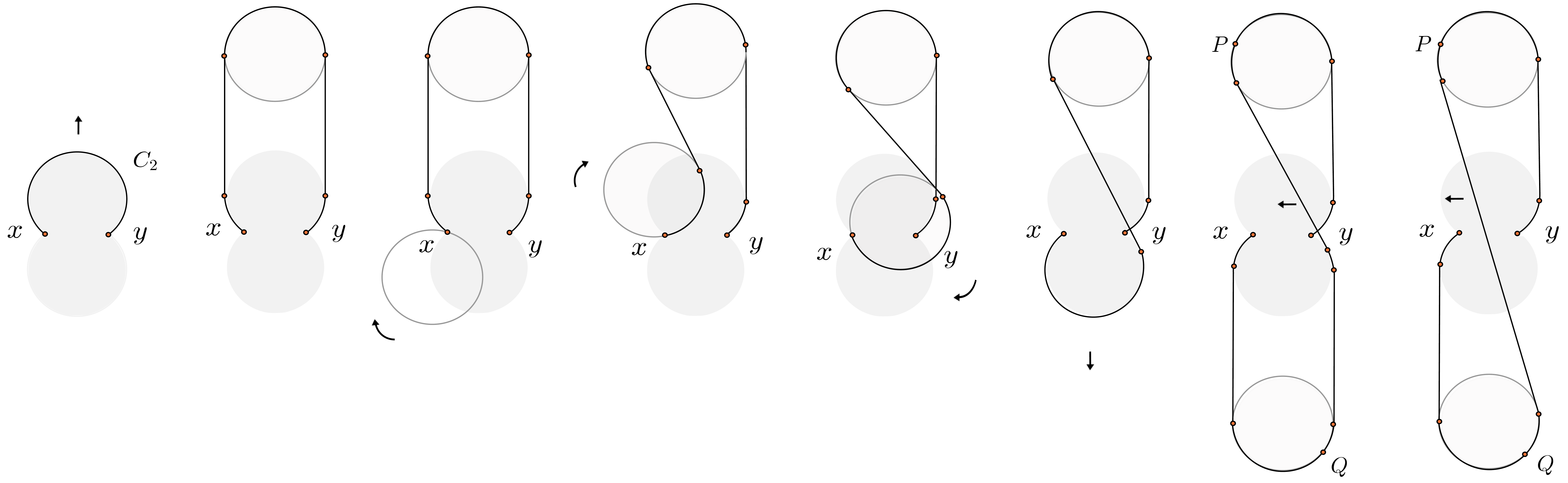}
\end{center}
\caption{The larger circular arcs joining $x$ and $y$ in $C_1$ and $C_2$ are $\kappa$-constrained homotopic.}
\label{fighomotarcmov}
\end{figure}}

\begin{definition} A curve $\sigma$ is said to be:
\begin{itemize}
\item {\it In} ${X}$ if $\sigma(t) \in {X}$ for all $t\in I$.
\item {\it Not in} ${X}$ if there exists $t\in I$ such that $\sigma(t) \notin {X}$.
\end{itemize}
\end{definition}

\begin{theorem}\label{below} A $\kappa$-constrained plane curve $\sigma:(0,s)\rightarrow \mathcal E$ with $\sigma \in \Sigma(x,y)$ cannot exist (see Figure \ref{figexampcurves}).
\end{theorem}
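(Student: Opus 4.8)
The plan is to reduce to Lemma \ref{r1} after a rigid motion. First I would record that $\mathcal E$ is disconnected. By Definition \ref{leq2r} it is cut by $cl(\mathcal I)$ into two open ``crescents'', one contained in $int(D_1)$ and one in $int(D_2)$, and these two pieces meet only at the cusps $x,y$, which lie on $\partial(\mathcal U)$ and hence are not in $\mathcal E$. Since $\sigma$ restricted to $(0,s)$ has connected image contained in $\mathcal E$, that image lies in a single crescent, say the one inside $D_2$; the reflection exchanging $C_1$ and $C_2$ reduces the other case to this one. Moreover each interior point of $\sigma$ lies in $int(D_2)$ and outside $D_1$: a point of the crescent lying in $D_1$ would lie in $int(D_1)\cap int(D_2)\subseteq \mathcal I$ (or on the bounding arc, in $cl(\mathcal I)$), which is excluded.

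Next I would choose pre-rotation coordinates $(u,w)$ with the chord $xy$ along the $w$-axis, its midpoint at the origin, and the centre of $D_1$ at $(-c,0)$, where $d=2a$ and $c=\sqrt{r^2-a^2}$; then rotate by a right angle via $(u,w)\mapsto(X,Y)=(-w,u)$. This sends the centre of $C_1$ to $(0,-c)$ and the endpoints $x,y$ to $(\mp a,0)$, so $C_1$ becomes a radius $r$ circle $C$ with centre on the negative $Y$-axis through two points of the $X$-axis, exactly as in Lemma \ref{r1}. Three facts then need checking: (i) the image of $\sigma$ lies in the band $\mathcal B=\{-r<X<r,\ Y\ge 0\}$; (ii) $\sigma(0),\sigma(s)$ lie on the $X$-axis; and (iii) some point of $Im(\sigma)$ lies above $C$. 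For (ii) this is immediate since $x,y\mapsto(\mp a,0)$. For (i), containment in $int(D_2)$ forces $|w|<r$, hence $|X|<r$, while the crescent lies in $\{u>0\}$, hence $Y>0$ on the interior and $Y=0$ at the endpoints. For (iii), every interior point lies outside $D_1$, i.e. outside the disk bounded by $C$, and has $Y>0$, so it lies strictly above the upper arc of $C$. Conditions (i)--(iii) are precisely the hypotheses Lemma \ref{r1} declares incompatible, so $\sigma$ cannot exist.

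I expect the only real work to be the elementary metric bookkeeping behind (i) and (iii): that the crescent is trapped in a band of width exactly $2r$ and that ``outside $D_1$'' becomes ``above $C$'' after the rotation. The band is tight, so the argument leans on the crescent being pinned to the half $\{u>0\}$; this follows from the computation that any point of $int(D_2)$ with $u\le 0$ already lies in $int(D_1)$ (since $(u+c)^2+w^2\le(u-c)^2+w^2<r^2$ when $u\le 0$), hence in the lens rather than the crescent. The cusps cause no trouble: they sit on the $X$-axis, the band's boundary only in the $Y$-direction, which $\mathcal B$ permits, and the extreme points of the crescent with $|w|=r$ lie on $C_2\subseteq\partial(\mathcal U)$ and are excluded from the open set $\mathcal E$, so the interior of $\sigma$ stays strictly inside the band. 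Once these inclusions are in place the contradiction is immediate; equivalently one could bypass Lemma \ref{r1} and sweep a radius $r$ disk upward to a first tangency with $\sigma$, invoking Lemma \ref{ntp} at that point, which is the mechanism underlying Lemma \ref{r1}.
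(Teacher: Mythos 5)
Your proof is correct and follows essentially the same route as the paper: both reduce the statement to the band--circle configuration of Lemma \ref{r1}, the paper by invoking Corollary \ref{crossect} to produce a cross section contradicting ${\mathcal E}\subseteq{\mathcal B}$, and you by verifying the hypotheses of Lemma \ref{r1} directly after a rigid motion. Your version is in fact more careful than the paper's brief argument, since you explicitly note that $\mathcal E$ has two components (so the inclusion in the band only holds one crescent at a time, after choosing the appropriate arc of $\partial({\mathcal I})$ as $C$) and you check the band containment and the ``above $C$'' condition quantitatively.
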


\begin{proof} Suppose there exists such a $\kappa$-constrained curve $\sigma$ (see Figure \ref{figexampcurves} centre). Consider an arc in $\partial({\mathcal I})$ as the arc of the circle $C$ from $p$ to $q$ in Lemma \ref{r1}. Corollary \ref{crossect} implies that $\sigma$ admits a cross section. The result follows as $\mathcal E$ is contained in $\mathcal B$, but the existence of a cross section implies that $\sigma$ cannot be contained in $\mathcal B$.
\end{proof}

\begin{corollary} The only $\kappa$-constrained plane curves {\it in} $\mathcal U$ are:
\begin{itemize}
\item the $\kappa$-constrained plane curves {\it in} $cl ({\mathcal I})$;
\item the $\kappa$-constrained plane curves having their image {\it in} $C_1$ or $C_2$.
\end{itemize}
\end{corollary}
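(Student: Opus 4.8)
The plan is to prove the forward inclusion, since the reverse is immediate: $cl(\mathcal I)$, $C_1$ and $C_2$ are all contained in $\mathcal U=D_1\cup D_2$. So let $\sigma\in\Sigma(x,y)$ with $Im(\sigma)\subseteq\mathcal U$, and aim to show that $Im(\sigma)\subseteq cl(\mathcal I)$, or $Im(\sigma)\subseteq C_1$, or $Im(\sigma)\subseteq C_2$. First I would fix coordinates with the chord $xy$ on the $x$-axis and the centres of $C_1,C_2$ at $(0,c)$ and $(0,-c)$, where $c=\sqrt{r^2-(d/2)^2}>0$. Two facts are worth recording: $\mathcal U\subseteq\{|x|\le r\}$ has width exactly $2r$, and the open crescents satisfy $\mathcal E_1=int(D_1)\setminus cl(D_2)\subseteq\{y>0\}$ and $\mathcal E_2\subseteq\{y<0\}$. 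Moreover $\mathcal U$ is the union of $cl(\mathcal I)$, the two open crescents $\mathcal E_1,\mathcal E_2$, and the two longer arcs of $C_1,C_2$, and $cl(\mathcal E_1)$ meets $cl(\mathcal I)$ only along the shorter arc of $C_2$.

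If $Im(\sigma)\subseteq C_1$ or $Im(\sigma)\subseteq C_2$ there is nothing to prove, so the whole content is to show that otherwise $\sigma$ never enters an open crescent, whence $Im(\sigma)\subseteq cl(\mathcal I)$. The main obstacle is exactly this exclusion of crescent excursions, and I would handle it by a localized version of the enclosing-circle argument used in the proof of Lemma \ref{r1}.

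\emph{Crux.} Suppose $\sigma$ meets the open crescent $\mathcal E_1$, and let $\mu$ be a maximal sub-arc of $\sigma$ whose image lies in $cl(\mathcal E_1)\subseteq D_1$. Since $\sigma\subseteq\mathcal U$ cannot cross the outer boundary (the longer arc of $C_1\subseteq\partial(\mathcal U)$), the endpoints of $\mu$ lie on the shorter arc of $C_2$ (or are $x,y$). Now translate $C_2$ continuously toward $C_1$ through radius-$r$ circles $C_2^v$, with $D_2^0=D_2$ and $D_2^{2c}=D_1$. Because $\mu\subseteq D_1=D_2^{2c}$ while $\mu\not\subseteq D_2^0$ (it has a point strictly outside $D_2$), there is a least $v\in(0,2c]$ with $\mu\subseteq D_2^v$, and by minimality $\mu$ is internally tangent to $C_2^v$ at some point $Q$. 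The endpoints of $\mu$ sit on the shorter arc of $C_2$, hence strictly inside $D_2^v$ for every $0<v<2c$, so $Q$ must be an interior point of $\mu$; then Lemma \ref{ntp} forces $\mu\subseteq C_2^v$, which is impossible, since $\mu$'s endpoints are interior to $D_2^v$ and not on $C_2^v$. The borderline value $v=2c$ is disposed of separately: there $\mu$ is internally tangent to $C_1$, and Lemma \ref{ntp} applied to $D_1$ would force $\mu\subseteq C_1$, again contradicting that an endpoint of $\mu$ lies on the shorter arc of $C_2$, off $C_1$. Hence $\sigma$ does not enter $\mathcal E_1$; applying the same argument to the reflected curve under $y\mapsto-y$ (which exchanges $C_1,C_2$ and $\mathcal E_1,\mathcal E_2$) shows $\sigma$ does not enter $\mathcal E_2$.

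\emph{Conclusion.} Having excluded both open crescents, $Im(\sigma)$ lies in the union of $cl(\mathcal I)$ and the two longer arcs, three sets that are pairwise disjoint except at $x,y$, where the longer arcs of $C_1$ and $C_2$ cross transversally. Since $\sigma$ is $C^1$, near any interior point of a longer arc its image coincides with that arc, and it cannot switch between two of the three pieces at $x$ or $y$ with mismatched tangents; by connectedness its image therefore lies wholly in one piece, giving $\sigma\subseteq cl(\mathcal I)$, or $Im(\sigma)\subseteq C_1$, or $Im(\sigma)\subseteq C_2$. I expect the hardest point to be the crux — verifying that the internal tangency $Q$ is an \emph{interior} point of $\mu$, so that Lemma \ref{ntp} yields circle-containment and hence a contradiction. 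This is precisely where the equality between the radius $r$ and the half-width of $\mathcal U$ is used, and it is a local reincarnation of the mechanism behind Lemma \ref{r1}.
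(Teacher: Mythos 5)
Your strategy is sound and in fact more thorough than the paper's own proof, which only treats a curve having a point in \emph{each} portion of $\mathcal E$ and invokes Lemma \ref{r1} once for each arc of $\partial(\mathcal I)$; your translating-circle crux excludes each crescent separately and is essentially a localized re-run of the proof of Lemma \ref{r1}. But there are two gaps. The smaller one is in the borderline case $v=2c$: you assert that Lemma \ref{ntp} ``forces $\mu\subseteq C_1$,'' yet that lemma only bites when the tangency point is \emph{interior} to $\mu$, and when the endpoints of $\mu$ are $x$ and $y$ themselves (e.g.\ $\mu=\sigma$) they lie on $C_1$, so a priori the tangency could occur at an endpoint and Lemma \ref{ntp} would then be consistent with $int(\mu)\cap C_1=\emptyset$. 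This is repairable: the points of $\mu$ outside $D_2^v$ lie in the lune $D_1\setminus D_2^v\subseteq\{y>v/2\}$, so any accumulation point of them as $v\to 2c$ lies on $C_1$ at height at least $c>0$ and hence is neither $x$ nor $y$, so it is interior to $\mu$ after all. (Also, the contradiction you cite there --- an endpoint of $\mu$ off $C_1$ --- evaporates when the endpoints are $x,y$; use instead that $\mu$ meets the open crescent $\mathcal E_1$, which is disjoint from $C_1$.)

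The more serious gap is in your concluding paragraph. ``Mismatched tangents'' rules out a $C^1$ switch between the two longer arcs at $x$ or $y$, since $C_1$ and $C_2$ cross transversally there, but it does \emph{not} rule out a switch between the longer arc of $C_1$ and $cl(\mathcal I)$ at $y$: the tangent of the longer arc of $C_1$ at $y$ coincides with the tangent of the \emph{shorter} arc of $C_1$ at $y$ (same circle), and the shorter arc lies in $cl(\mathcal I)$, so the tangents match and a curve could in principle run along the longer arc to $y$ and continue smoothly into the lens. Excluding this needs the curvature bound again, not just $C^1$-regularity. A clean fix: once the crescents are excluded, if $\sigma$ meets $A_1\setminus\{x,y\}$ (where $A_1$ denotes the longer arc of $C_1$) but not $A_2\setminus\{x,y\}$, then $Im(\sigma)\subseteq cl(\mathcal I)\cup A_1\subseteq D_1$ and $\sigma$ meets $C_1$ at an interior parameter, so Lemma \ref{ntp} applied to $D_1$ gives $Im(\sigma)\subseteq C_1$ outright; if instead $\sigma$ meets both $A_1\setminus\{x,y\}$ and $A_2\setminus\{x,y\}$, it has a point strictly above $C_2$ and one strictly below $C_1$, and Corollary \ref{crossect} yields a cross section, which is impossible for a curve confined to $\mathcal U\subseteq\{|x|\le r\}$; and if it meets neither, $Im(\sigma)\subseteq cl(\mathcal I)$. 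With these two repairs your argument is complete.
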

\begin{proof} Suppose there exists a $\kappa$-constrained curve $\sigma$ {\it in} ${\mathcal U}$ having a point in the portion of $\mathcal E$ enclosed by $C_1$ and a point in the portion of $\mathcal E$ enclosed by $C_2$. By continuity, $\sigma$ has points lying in each of the arcs in $\partial({\mathcal I})$. By considering in turn each of the arcs in $\partial({\mathcal I})$ as $C$ in Lemma \ref{r1} the result follows.
\end{proof}

\begin{proposition}\label{trappedcbc} A $\kappa$-constrained plane curve {\it in} $cl({\mathcal I})$ not being an arc {\it in} $\partial({\mathcal I})$ cannot be made $\kappa$-constrained homotopic to a plane curve {\it in} $cl({\mathcal I})$ having a single common point with $\partial({\mathcal I})$ other than $x$ and $y$.

\end{proposition}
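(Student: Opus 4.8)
The plan is to reduce this homotopy statement to a purely geometric impossibility: I will show that \emph{no} $\kappa$-constrained curve lying in $cl({\mathcal I})$ can meet $\partial({\mathcal I})$ in a single point other than $x$ and $y$. Once this is established the proposition follows immediately, since a curve to which $\sigma$ is $\kappa$-constrained homotopic must be a $\kappa$-constrained curve (it is an endpoint of a homotopy in $\Sigma(x,y)$), and the target curve described in the statement is required to be exactly such a forbidden configuration. Thus the role of $\sigma$ and of the homotopy is only to package the geometric fact; I expect the genuine work to be the nonexistence claim.

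The main tool is Lemma \ref{ntp}, rescaled from unit radius to radius $r$ by the same scaling that the paper uses to adapt Lemmas \ref{rad} and \ref{seg}: a $\kappa$-constrained curve whose image lies in a radius $r$ disk $D$ is either contained entirely in $\partial(D)$, or its interior (the image of the open parameter interval) is disjoint from $\partial(D)$. I would first record the ambient geometry. Since $d<2r$ and $C_1\cap C_2=\{x,y\}$ with $x\neq y$, the set $D_1\cap D_2$ is a convex body, so $cl({\mathcal I})=cl(int(D_1\cap D_2))=D_1\cap D_2$; in particular every curve \emph{in} $cl({\mathcal I})$ lies simultaneously in $D_1$ and in $D_2$. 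Moreover $C_i\cap cl({\mathcal I})=C_i\cap D_j$ (with $j\neq i$) is precisely the arc $\alpha_i$ of $C_i$ from $x$ to $y$ lying inside the other disk; a direct computation shows this arc subtends central angle $2\arccos\!\bigl(\tfrac{d}{2r}\bigr)<\pi$ because $d<2r$, so it is indeed the shorter arc, i.e. the component of $\partial({\mathcal I})$ carried by $C_i$.

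Now suppose for contradiction that a $\kappa$-constrained $\gamma\in\Sigma(x,y)$ lies in $cl({\mathcal I})$ and meets $\partial({\mathcal I})$ in a single point $z$ other than $x$ and $y$. Because $x,y$ are the endpoints of $\gamma$ and $z\neq x,y$, the point $z$ is the image of an interior parameter of $\gamma$, and because $\partial({\mathcal I})=\alpha_1\cup\alpha_2$ with $\alpha_1\cap\alpha_2=\{x,y\}$, the point $z$ lies on exactly one of $C_1,C_2$; say $z\in\alpha_1\subset C_1$. Applying the rescaled Lemma \ref{ntp} to $\gamma\subset D_1$, the interior of $\gamma$ meets $\partial(D_1)=C_1$ at $z$, so $\gamma$ must be contained entirely in $C_1$. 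Then $Im(\gamma)\subset C_1\cap cl({\mathcal I})=\alpha_1$, and since $\gamma$ runs continuously from $x$ to $y$, which are the endpoints of $\alpha_1$, its image is all of $\alpha_1$. Hence $\gamma$ and $\partial({\mathcal I})$ share the entire arc $\alpha_1$, contradicting that they meet only at $z$. The identical argument with $D_2$ in place of $D_1$ settles the case $z\in\alpha_2$. Therefore no such $\gamma$ exists, and in particular $\sigma$ cannot be $\kappa$-constrained homotopic to one.

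The step that I expect to need the most care is the verification that the single contact point $z$ is genuinely an interior point lying on a \emph{single} circle $C_i$, since this is what triggers the full dichotomy of Lemma \ref{ntp} and forces $\gamma$ onto $C_i$: one must use that $z$ is distinct from the endpoints $x,y$ to place it in the open parameter range, and that $\alpha_1\cap\alpha_2=\{x,y\}$ to keep $z$ off the other circle. Confirming $C_i\cap cl({\mathcal I})=\alpha_i$ (rather than a longer arc) likewise relies essentially on $d<2r$. The hypothesis that $\sigma$ is not itself an arc \emph{in} $\partial({\mathcal I})$ plays no role in this impossibility argument; it simply records the dichotomy between boundary arcs and genuinely interior curves that underlies the trapping phenomenon announced after Definition \ref{leq2r}.
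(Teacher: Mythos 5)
Your proposal is correct and follows exactly the route the paper intends: the paper's own proof is simply ``Immediate from Lemma \ref{ntp}'', and your argument is a careful expansion of that one-line proof, applying the (rescaled) dichotomy of Lemma \ref{ntp} to $D_1$ or $D_2$ to show the target curve cannot exist at all. No discrepancies to report.
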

\begin{proof} Immediate from Lemma \ref{ntp}.
\end{proof}

\begin{proposition}\label{c} Consider $x,y\in {\mathbb R}^2$ so that $0<d<2r$. The arcs in $\partial({\mathcal I})$ cannot be made $\kappa$-constrained homotopic to a curve {\it not in} ${ cl}(\mathcal I)$. 
\end{proposition}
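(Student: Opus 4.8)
The plan is to argue by contradiction, using a first–escape time for the homotopy together with the corollary characterising the $\kappa$-constrained curves lying \emph{in} $\mathcal{U}$. Assume some arc $\alpha$ in $\partial(\mathcal{I})$, say $\alpha\subseteq C_1$, were $\kappa$-constrained homotopic to a curve $\gamma$ \emph{not in} $cl(\mathcal{I})$, via a $\kappa$-constrained homotopy $H\colon[0,1]\to\Sigma(x,y)$ with $H(0)=\alpha$. Put $p^{*}=\inf\{\,p : H(p)\text{ is \emph{not in} }cl(\mathcal{I})\,\}$. Because $cl(\mathcal{I})$ is closed and $H$ is continuous for the $C^{1}$ metric (so the images converge uniformly), the first thing I would record is that $H(p^{*})$ is \emph{in} $cl(\mathcal{I})$, whereas for every $p>p^{*}$ as close to $p^{*}$ as we like the curve $H(p)$ has a point outside $cl(\mathcal{I})$.

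The heart of the proof is to show that for $p$ slightly larger than $p^{*}$ the curve $H(p)$ is still entirely \emph{in} $\mathcal{U}$ while meeting $\mathcal{E}$. Here I would use that $cl(\mathcal{I})\setminus\{x,y\}\subseteq int(\mathcal{U})$, so the only points of $H(p^{*})$ on $\partial(\mathcal{U})$ are the endpoints $x,y$. Away from the endpoints a compact subarc of $H(p^{*})$ lies at positive distance from $\partial(\mathcal{U})$, and uniform continuity keeps the matching part of $H(p)$ inside $int(\mathcal{U})$. Near an endpoint, say $x$, the circles $C_1,C_2$ meet transversally and cut a neighbourhood of $x$ into four sectors; the lens sector ($\subseteq\mathcal{I}$) and the exterior sector ($\subseteq\mathbb{R}^{2}\setminus\mathcal{U}$) are opposite, separated by the two sectors comprising $\mathcal{E}$. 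Since $H(p^{*})$ is \emph{in} $cl(\mathcal{I})$, its unit tangent at $x$ lies in the closed lens cone, which is bounded away from both the exterior cone and the direction of the outer arc of $C_1$; by $C^{1}$-continuity the tangent of $H(p)$ at $x$ is likewise bounded away from these, so the short subarc of $H(p)$ leaving $x$ enters only the lens sector or the two $\mathcal{E}$-sectors. The same holds at $y$. Hence $H(p)\subseteq\mathcal{U}$, and since $H(p)$ is \emph{not in} $cl(\mathcal{I})$ it has a point of $\mathcal{U}\setminus cl(\mathcal{I})$; its proximity to $H(p^{*})\subseteq cl(\mathcal{I})$ keeps that point off the outer arcs of $\partial(\mathcal{U})$ (which are far from the lens except at the corners, where the cone analysis already traps $H(p)$), so the point lies in $\mathcal{E}$.

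To conclude, I would invoke the corollary that the only $\kappa$-constrained curves \emph{in} $\mathcal{U}$ are those \emph{in} $cl(\mathcal{I})$ and those whose image lies \emph{in} $C_1$ or $C_2$. The curve $H(p)$ is \emph{in} $\mathcal{U}$ but \emph{not in} $cl(\mathcal{I})$; moreover $\mathcal{E}$ is disjoint from $C_1\cup C_2$, so the point of $H(p)$ in $\mathcal{E}$ shows that $H(p)$ is contained in neither $C_1$ nor $C_2$. This contradicts the corollary, and the contradiction proves the proposition. (Lemma \ref{ntp} is what underlies that corollary, and in the same spirit guarantees that a curve cannot touch $C_1$ or $C_2$ tangentially from inside $cl(\mathcal{I})$ except by coinciding with the circle.)

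The step I expect to be the real obstacle is the middle one: verifying that immediately after the first escape the deformed curve cannot leap out of $\mathcal{U}$ but is forced into $\mathcal{E}$. This is precisely where the corners $x,y$ demand care, since there $cl(\mathcal{I})$ touches $\partial(\mathcal{U})$; the transversal crossing of $C_1$ and $C_2$ at $x,y$ together with the $C^{1}$ (rather than merely $C^{0}$) nature of the homotopy is exactly what makes the tangent–cone confinement go through.
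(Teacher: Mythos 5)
Your argument is correct, and it is in fact considerably more complete than the proof the paper gives. The paper's proof of Proposition \ref{c} is three sentences: it takes the target curve $\gamma$ \emph{not in} $cl(\mathcal I)$, notes that $\gamma$ has a point above (or below) the radius $r$ circle $C$ carrying the relevant arc of $\partial(\mathcal I)$, and declares an ``immediate'' contradiction with Lemma \ref{r1} --- leaving implicit the continuity argument that is actually needed, since Lemma \ref{r1} only forbids such a point for curves confined to the band $\mathcal B$, and $\gamma$ itself need not lie there (by Corollary \ref{crossect} it merely acquires a cross section, which is not by itself absurd). What you supply is precisely the missing mechanism: the first-escape time $p^{*}$, the confinement of $H(p)$ to $\mathcal U$ for $p$ slightly beyond $p^{*}$ (including the tangent-cone analysis at the corners $x,y$, where $cl(\mathcal I)$ touches $\partial(\mathcal U)$ and mere $C^{0}$ proximity would not suffice), and then the contradiction with the corollary classifying the $\kappa$-constrained curves \emph{in} $\mathcal U$. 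The underlying rigidity is the same --- everything still rests on Lemma \ref{r1} --- but your decomposition is genuinely different and actually closes the argument. Two small suggestions. First, the last step can be shortcut: once $H(p)$ is known to lie in an $\varepsilon$-neighbourhood of $cl(\mathcal I)$, hence strictly between the lines $L_1$ and $L_2$, while having a point above or below one of the circles through $x$ and $y$, Corollary \ref{crossect} already forces a cross section, i.e.\ points on opposite sides of the width-$2r$ band, which is incompatible with that neighbourhood; this avoids leaning on the unnamed corollary about curves in $\mathcal U$, whose written proof only treats curves meeting both lobes of $\mathcal E$. Second, in the cone analysis the uniform estimate $\lVert\sigma'(t)-\sigma'(0)\rVert\leq\kappa t$ coming from the curvature bound is what keeps the short initial arc inside the thin cone uniformly in $p$; you gesture at this with ``$C^{1}$-continuity,'' and it is worth making explicit.
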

\begin{proof} Suppose there exists a $\kappa$-constrained homotopy between the upper arc in $\partial({\mathcal I})$ and a curve $\gamma$ not in ${ cl}(\mathcal I)$. Let $C$ be the radius $r$ circle containing the upper arc in $\partial({\mathcal I})$ (see Figure \ref{figband}). Since the curve $\gamma$ has a point not in ${cl}(\mathcal I)$ lying above $C$ (or below $C$, see Theorem \ref{below} and Figure \ref{figexampcurves} centre), then by applying Lemma \ref{r1} we obtain immediately a contradiction. 
\end{proof}

\begin{remark}\hfill
\begin{itemize}
\item If the distance between the endpoints satisfies $0<d<2r$, by virtue of Proposition \ref{trappedcbc} and Proposition \ref{c}, we have established a lower bound $>1$ for the number of homotopy classes in $\Sigma(x,y)$.
\item It is not hard to see that the arcs in $\partial({\mathcal I})$ can be made $\kappa$-constrained homotopic to the line segment joining $x$ and $y$. See \cite{paperd} for a rigorous continuity argument for the preservation of the curvature bound under continuous deformations.
\end{itemize}
\end{remark}

\begin{proposition} \label{segbound} Suppose $x\neq y$. The shortest $\kappa$-constrained plane curve in $\Sigma(x,y)$ is the line segment joining $x$ and $y$.
\end{proposition}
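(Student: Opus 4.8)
The plan is to reduce the statement to Lemma \ref{seg} after normalising the position of the endpoints, and to verify separately that the segment itself is admissible. First I would observe that the straight line segment $\ell$ joining $x$ and $y$ belongs to $\Sigma(x,y)$: it is $C^\infty$ and satisfies $\ell''\equiv 0$, so the curvature bound $\|\ell''\|\leq\kappa$ holds trivially, and its length equals $d:=\|x-y\|$. Thus $\ell$ is a competitor whose length I will show no other curve can beat.

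Next I would normalise coordinates. Choose an orientation-preserving isometry $\Phi$ of $\mathbb{R}^2$ with $\Phi(x)=(0,0)$ and $\Phi(y)=(0,d)$. Since rigid motions preserve both arc length and $\|\sigma''\|$, the assignment $\sigma\mapsto\Phi\circ\sigma$ is a length-preserving bijection from $\Sigma(x,y)$ onto $\Sigma((0,0),(0,d))$ that respects the $\kappa$-constrained property. Hence it suffices to bound lengths in this normalised setting.

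Now let $\sigma\in\Sigma(x,y)$ be arbitrary; after applying $\Phi$ I may assume $\sigma(0)=(0,0)$ and $\sigma(s)=(0,d)$ with $d\geq 0$. By Definition \ref{cbc} every element of $\Sigma(x,y)$ is $C^1$, so Lemma \ref{seg} applies directly and yields $\mathcal{L}(\sigma)\geq d$. Combined with $\mathcal{L}(\ell)=d$, this shows that $\ell$ realises the minimal length among all curves in $\Sigma(x,y)$.

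Finally, for genuine uniqueness of the minimiser I would examine the equality case. Writing $\sigma=(\sigma_x,\sigma_z)$, the chain $\mathcal{L}(\sigma)=\int_0^s\|\sigma'\|\geq\int_0^s\sigma_z'=d$ forces $\|\sigma'(t)\|=\sigma_z'(t)$ for all $t$, whence $\sigma_x'\equiv 0$ and $\sigma_z'\geq 0$; thus $\mathrm{Im}(\sigma)$ coincides with the segment. I expect no serious obstacle here, since the analytic content is carried entirely by Lemma \ref{seg}, which is granted; the only points requiring care are confirming that the line segment is itself $\kappa$-constrained and that the normalising isometry preserves the curvature constraint.
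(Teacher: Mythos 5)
Your proposal is correct and takes the same route as the paper, whose entire proof is ``Immediate from Lemma \ref{seg}''; you have simply spelled out the normalisation by an isometry, the admissibility of the segment, and the equality case that the paper leaves implicit.
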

\begin{proof} Immediate from Lemma \ref{seg}.
\end{proof}

The following result can be found in \cite{paperb} for $1$-constrained curves. The proof for $\kappa$-constrained curves follows the same lines.

\begin{theorem}{\it (cf. Theorem 4.6 in \cite{paperb})} \label{loopbound} 
The shortest closed $\kappa$-constrained plane curve corresponds to the boundary of a radius $r$ disk.
\end{theorem}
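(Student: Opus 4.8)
The plan is to establish a sharp lower bound on the length of any closed $\kappa$-constrained curve and then show the bound $2\pi r$ is realised precisely by the circle of radius $r$. The key structural fact I would exploit is that a closed curve necessarily returns to its starting point and hence, being $C^1$ and closed, its tangent indicatrix sweeps out a total signed curvature that is a nonzero integer multiple of $2\pi$. Concretely, for a $C^1$ closed plane curve the total turning of the unit tangent is $2\pi n$ for some nonzero integer $n$ (the turning number cannot be zero for a closed immersed curve). Since the curve is arc-length parametrised with $\|\sigma''(t)\|\le \kappa = 1/r$ wherever defined, the rate of turning is bounded by $\kappa$, so the length must satisfy $\mathcal L(\sigma)\ge \frac{1}{\kappa}\,|{\text{total turning}}| \ge \frac{2\pi}{\kappa} = 2\pi r$.

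First I would make the turning argument precise. Writing $\sigma'(t) = (\cos\theta(t), \sin\theta(t))$ for a continuous lift $\theta$ of the tangent angle (this exists because $\sigma$ is $C^1$ with unit-speed tangent), the piecewise-$C^2$ hypothesis gives $\theta'(t) = $ signed curvature with $|\theta'(t)| \le \kappa$ almost everywhere. Closedness forces $\sigma'(0) = \sigma'(s)$, so $\theta(s) - \theta(0) = 2\pi n$ for an integer $n$; that $n \ne 0$ follows because a closed curve with zero turning number would have to reverse direction in a way incompatible with returning to its start as an immersed loop (alternatively, invoke that the turning number of any simple closed curve is $\pm 1$, and handle the general immersed case by noting the total absolute turning is at least $2\pi$). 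Integrating $|\theta'|\le \kappa$ then yields $2\pi \le |\theta(s)-\theta(0)| = \left|\int_0^s \theta'(t)\,dt\right| \le \int_0^s |\theta'(t)|\,dt \le \kappa s$, whence $s = \mathcal L(\sigma) \ge 2\pi/\kappa = 2\pi r$.

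Next I would verify the boundary case. The boundary of a radius $r$ disk is a closed $\kappa$-constrained curve (its curvature is exactly $1/r = \kappa$) of length exactly $2\pi r$, so the infimum is attained. To conclude that it is the \emph{shortest}, equality analysis shows $|\theta'(t)| = \kappa$ must hold identically and the turning number must be $\pm 1$, forcing the curve to trace a single radius $r$ circle; thus the minimiser is exactly $\partial(D)$ for a radius $r$ disk $D$, as claimed.

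The main obstacle I anticipate is the turning-number step, specifically the claim that the total turning is at least $2\pi$ in absolute value for an arbitrary (possibly non-simple) closed immersed $\kappa$-constrained curve. For simple closed curves this is the Hopf \emph{Umlaufsatz} and is immediate, but the definition here permits self-intersections, so I must argue that $n \ne 0$ in general. I expect to handle this by appealing to the classical fact (Whitney–Graustein, cited in the paper via \cite{whitney}) that the turning number is a complete regular-homotopy invariant of closed plane curves and that any closed immersed curve has a well-defined nonzero turning number unless it is regularly homotopic to a figure-eight type curve; a cleaner route, which I would adopt, is to note that since the paper only needs the \emph{length} lower bound, it suffices that $|\theta(s)-\theta(0)|\ge 2\pi$, and this minimal nonzero turning is forced by closedness together with the requirement that $\sigma$ be an immersion returning to its initial tangent direction. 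Making this last implication airtight — ruling out a closed immersed loop of total turning strictly between $0$ and $2\pi$ — is the delicate point, and I would lean on the referenced Theorem 4.6 of \cite{paperb} and the Whitney classification to close the gap.
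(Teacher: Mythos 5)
The paper does not actually prove this statement: it imports it from Theorem 4.6 of \cite{paperb}, where it is established for $1$-constrained curves via the normalisation/reduction machinery, with only the remark that the $\kappa$ case ``follows the same lines.'' So your proposal must stand on its own, and as written it has a genuine gap at exactly the point you flag. The claim that ``the turning number cannot be zero for a closed immersed curve'' is false: a figure-eight is a closed immersed plane curve of turning number $0$, and Whitney's classification \cite{whitney} --- which you invoke --- realises \emph{every} integer, including $0$, as the turning number of a closed immersion. For such curves the inequality $2\pi\le|\theta(s)-\theta(0)|$ fails, so the signed-turning argument does not deliver the length bound. Your proposed repair, to ``lean on the referenced Theorem 4.6 of \cite{paperb},'' is circular, since that is precisely the statement to be proved.

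The argument is salvageable, but the zero-turning case needs a genuine ingredient. Two options: (i) prove the Fenchel-type bound $\int_0^s|\theta'(t)|\,dt\ge 2\pi$ directly --- closedness gives $\int_0^s\sigma'(t)\cdot v\,dt=0$ for every unit vector $v$, so the tangent image meets every open half-circle and hence $\max\theta-\min\theta>\pi$; when the turning number is $0$ the total variation of $\theta$ is at least $2(\max\theta-\min\theta)>2\pi$, and when it is nonzero it is at least $2\pi$; or (ii) note that a non-simple closed curve contains a simple closed sub-loop which, by the Pestov--Ionin lemma \cite{pestov} (already used in the proof of Theorem \ref{Iconnect}), encloses a radius $r$ disk and therefore has length at least $2\pi r$, reducing everything to the simple case where the Umlaufsatz applies. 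You should also address a definitional point: an element of $\Sigma(x,x)$ is not required to satisfy $\sigma'(0)=\sigma'(s)$, so the tangent indicatrix need not close up, a corner at the basepoint can absorb up to $\pi$ of exterior angle, and the turning argument alone then only yields $\mathcal L(\sigma)\ge\pi r$; this case must be excluded or treated separately. Your equality analysis in the final step is fine once the lower bound is secured.
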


In general, if $\sigma:I\to{\mathbb R}^2$ is a length minimiser, the image of $\sigma$ restricted to $[a,b]\subset I$ is also a length minimiser. From Theorem \ref{loopbound} we immediately have the following result.

\begin{corollary}\label{minoutside} If $0<d<2r$. The minimal length $\kappa$-constrained plane curves {\it not in} $cl({\mathcal I})$ are the longer arcs between $x$ and $y$ in $C_1$ and $C_2$.
\end{corollary}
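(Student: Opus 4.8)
The plan is to establish the lower bound (minimality) for the longer arcs and then argue that no shorter $\kappa$-constrained curve avoiding $cl(\mathcal I)$ can exist. The key observation is that Corollary \ref{minoutside} is stated as an immediate consequence of Theorem \ref{loopbound} together with the preceding structural results, so I would exploit the remark that any sub-arc of a length minimiser is itself a length minimiser.

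First I would characterise what it means for a $\kappa$-constrained curve $\sigma \in \Sigma(x,y)$ to be \emph{not in} $cl(\mathcal I)$: by the Corollary following Theorem \ref{below}, the only $\kappa$-constrained curves \emph{in} $\mathcal U$ are those in $cl(\mathcal I)$ or those with image in $C_1$ or $C_2$; and by Theorem \ref{below} no such curve can live in $\mathcal E$. Hence a curve not in $cl(\mathcal I)$ must either have image in one of the circles $C_1,C_2$, or it must leave $\mathcal U$ altogether. This dichotomy is the crux: I would argue that any candidate for the minimiser must touch a point outside $cl(\mathcal I)$, and then compare its length against the longer arc.

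Next I would carry out the length comparison. Suppose $\sigma$ is not in $cl(\mathcal I)$. Then $\sigma$ passes through a point above $C$ (or symmetrically below), where $C$ is the radius $r$ circle carrying one of the arcs in $\partial(\mathcal I)$, exactly as in Proposition \ref{c}. Using Lemma \ref{r1} and Corollary \ref{crossect}, such a curve admits a cross section and so its image necessarily sweeps past the endpoints of the longer arc. I would then invoke Theorem \ref{loopbound}: since the boundary of a radius $r$ disk is the shortest closed $\kappa$-constrained curve, and the longer arcs are precisely the portions of $C_1$ and $C_2$ subtended outside $\mathcal I$, any $\kappa$-constrained curve that is forced to reach those outer regions while remaining $\kappa$-constrained cannot be shorter than the corresponding arc of the radius $r$ circle. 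The sub-arc minimality principle stated just before the corollary guarantees that the relevant portion of the optimal closed curve (the disk boundary) restricted between $x$ and $y$ is itself a minimiser, and this restricted arc is exactly the longer arc in $C_1$ or $C_2$.

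The main obstacle I anticipate is making rigorous the claim that a curve \emph{not in} $cl(\mathcal I)$ is genuinely constrained to have length at least that of the longer arc, rather than merely dipping outside $cl(\mathcal I)$ by an arbitrarily small amount and then returning. To handle this I would argue by the curvature bound: once $\sigma$ exits $cl(\mathcal I)$ it enters either $\mathcal E$ (impossible by Theorem \ref{below}) or the exterior of $\mathcal U$, and in the latter case the bound $\kappa=\tfrac1r$ forces the curve to follow an arc of curvature at most that of a radius $r$ circle, so it cannot re-enter quickly; the extremal such behaviour is precisely tracing the longer arc of $C_1$ or $C_2$. Combining this with Proposition \ref{segbound} (which rules out the straight segment, as that lies in $cl(\mathcal I)$) isolates the two longer arcs as the minimal-length representatives not in $cl(\mathcal I)$, completing the argument.
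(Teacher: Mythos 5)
Your overall strategy -- deriving the corollary from Theorem \ref{loopbound} together with the remark that a restriction of a length minimiser is again a length minimiser -- is exactly the route the paper takes; indeed the paper offers no further argument, stating the corollary as immediate from those two facts. However, your attempt to flesh this out contains a concrete error at the very step you yourself single out as the main obstacle. You dispose of the ``small excursion'' objection by claiming that once $\sigma$ exits $cl(\mathcal I)$ it enters either $\mathcal E$, ``impossible by Theorem \ref{below}'', or the exterior of $\mathcal U$. Theorem \ref{below} only forbids a curve of $\Sigma(x,y)$ whose \emph{entire} image (off the endpoints) lies in $\mathcal E$; it says nothing about a curve that passes through $\mathcal E$ transiently, and such transits are not excluded (a curve leaving $x$ in a direction pointing into one of the crescents of $\mathcal E$ starts its life there). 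So the dichotomy you rely on does not hold as stated, and the loophole you identified is not actually closed.

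A second gap is that the length comparison is never carried out. The existence of a cross section (via Lemma \ref{r1} and Corollary \ref{crossect}) gives two points of $\sigma$ at distance at least $2r$, hence by Lemma \ref{seg} a length contribution of at least $2r$ between them; but the longer arc has length $r\bigl(2\pi-2\arcsin\frac{d}{2r}\bigr)>2r$, so this alone does not show that a curve not in $cl(\mathcal I)$ is at least as long as the longer arc. Likewise, the sub-arc minimality principle gives that the longer arc is a minimiser among curves sharing its endpoints \emph{and end tangents} (being a restriction of the optimal closed curve of Theorem \ref{loopbound}); it does not by itself yield minimality within the class of all curves of $\Sigma(x,y)$ not in $cl(\mathcal I)$, which is what the corollary asserts. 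Your proposal asserts that ``the extremal such behaviour is precisely tracing the longer arc'' rather than deriving it. To be fair, the paper's own justification is equally terse, but as a self-contained proof your write-up leaves these two steps open.
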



\subsection{Normalising and reducing $\kappa$-constrained curves} \label{redprocess}

Here we discuss about crucial ideas presented in \cite{papera, paperb, paperd}. Recall from Definition \ref{fragfbp} that a fragmentation for a $\kappa$-constrained curve $\sigma$ corresponds to a partition of the image of $\sigma$ in a way that each piece, or {fragment}, has length less than $r=\frac{1}{\kappa}$. The idea is to consider fragments of length less than $r$ in order to allow the construction of a specially convenient type of curves called {\it replacements}. A replacement is a {\sc csc} curve i.e., a $\kappa$-constrained curve with fixed initial and final points and vectors corresponding to a concatenation of three consecutive pieces, being these, an arc in a radius $r$ circle followed by a line segment followed by an arc in a radius $r$ circle. The following two results are of importance. 

 \begin{proposition}\label{homotopyfragment} (Proposition 3.6. in \cite{paperd}) A fragment is bounded-homotopic to its replacement.
\end{proposition}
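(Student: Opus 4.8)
The plan is to prove that a fragment $\sigma$ (a $\kappa$-constrained curve of length less than $r$) is bounded-homotopic to its replacement, where the replacement is a {\sc csc} curve sharing the same initial and final points and tangent vectors as $\sigma$. The natural strategy is to exhibit an explicit one-parameter family interpolating between $\sigma$ and its replacement and then verify that every intermediate curve remains $\kappa$-constrained. First I would set up coordinates: by a rigid motion, place $\sigma(0)$ at the origin with $\sigma'(0)$ pointing along the positive $x$-axis, so that the endpoint data $(\sigma(0),\sigma'(0))$ and $(\sigma(s),\sigma'(s))$ are normalised. The fact that $\mathcal L(\sigma)<r$ is crucial here because it confines $\sigma$ to a small neighbourhood of the origin (by Lemma \ref{seg}, a $\kappa$-constrained curve of length $<r$ cannot wander far or turn through a large angle), which guarantees that a {\sc csc} replacement with the prescribed endpoint data actually exists and that there is room to deform without the curvature blowing up.

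The key steps, in order, are as follows. First I would establish existence and uniqueness (or at least controlled existence) of the {\sc csc} replacement: given the endpoint positions and tangent directions, and using that the turning angle of $\sigma$ is small because its length is below $r$, I would show a genuine arc--segment--arc curve matching the four boundary conditions can be constructed with both circular arcs of radius exactly $r$ (the extremal radius), so the replacement saturates the curvature bound on its circular pieces but never exceeds it. Second, I would parametrise the homotopy. Rather than interpolating pointwise (which need not preserve the curvature bound), the cleaner approach is to interpolate at the level of the signed curvature function: write $\sigma$ via its unit tangent angle $\theta_\sigma(t)$ with $|\theta_\sigma'(t)|\le\kappa$, write the replacement similarly with $\theta_{\mathrm{csc}}$, and define $\theta_p = (1-p)\theta_\sigma + p\,\theta_{\mathrm{csc}}$. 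Because the set $\{|\theta'|\le\kappa\}$ is convex, every $\theta_p$ also satisfies the curvature bound, so the reconstructed curve $H(p)(t)=\int_0^t(\cos\theta_p,\sin\theta_p)$ is automatically $\kappa$-constrained. Third, I would correct for endpoints: this linear interpolation fixes the initial point and tangent but will generally not land the far endpoint at $\sigma(s)$ with the correct final tangent, so I would compose the interpolation with a small length-reparametrisation and a matching adjustment (a shooting/continuity argument) to ensure $H(p)\in\Sigma(x,y)$ for all $p$ while keeping the $C^1$ continuity in $p$ demanded by Definition \ref{homcbc}.

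The main obstacle I anticipate is precisely this endpoint-matching step: convex interpolation of curvature profiles preserves the bound effortlessly, but it does not respect the two-point boundary conditions, so the homotopy must be coupled to a compensating deformation, and one must prove that the compensation can always be made small enough to stay $\kappa$-constrained. This is exactly where $\mathcal L(\sigma)<r$ does the heavy lifting: the smallness of the fragment forces the required endpoint corrections to be of order comparable to the fragment's own size, so the available ``slack'' below the curvature bound is never exhausted. I would make this quantitative with an implicit function / degree argument showing the endpoint map is a submersion onto a neighbourhood of the target data for short curves, guaranteeing a continuous selection of corrected curves. Since this result is cited as Proposition 3.6 from \cite{paperd}, I would finally remark that the full continuity verification for the preservation of the bound throughout the family is carried out there, and that both $\sigma$ and its replacement therefore lie in the same connected component of $\Sigma(x,y)$, which is all that ``bounded-homotopic'' requires.
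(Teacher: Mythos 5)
The paper does not prove Proposition \ref{homotopyfragment}; it imports it from \cite{paperd}, where the homotopy is built quite differently from what you propose: one replaces ever-longer initial sub-fragments, i.e.\ for $p\in[0,s]$ one takes the {\sc csc} replacement of $\sigma|_{[0,p]}$ concatenated with $\sigma|_{[p,s]}$. Every curve in that family automatically has the prescribed endpoints and tangents and is $\kappa$-constrained (it is a {\sc csc} curve joined $C^1$ to a piece of $\sigma$), so the only thing to verify is continuity of the replacement in $p$, which is exactly where $\mathcal L(\sigma)<r$ enters. Your route avoids none of the difficulty and concentrates all of it in the step you leave open.

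Concretely, the gap is the endpoint-matching step. First, the convex combination $\theta_p=(1-p)\theta_\sigma+p\,\theta_{\mathrm{csc}}$ is not even well defined until you reconcile the different arc lengths of $\sigma$ and its replacement, and any rescaling of domains changes both the derivative bound and the reconstructed endpoint. Second, the discrepancy between $H(p)(s_p)$ and $y$ is \emph{not} higher-order small: if the replacement is substantially shorter than $\sigma$ (which is the generic case, by Lemma \ref{fundlemma}), the interpolated curve misses $y$ by an amount comparable to the length of the fragment itself, so there is no smallness parameter making the ``slack below the curvature bound'' argument work. Third, the implicit-function/submersion claim needs the endpoint map to be open \emph{relative to the constraint set} $\{|\theta'|\le\kappa\}$ at each interpolant, and this fails precisely when an interpolant saturates the bound (e.g.\ is an arc of a radius-$r$ circle), which can occur; nothing in your argument rules this out. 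As written, the proposal reduces the proposition to an unproved assertion that a curvature-respecting endpoint correction exists and varies continuously in $p$, which is the entire content of the result. If you want to salvage the convex-interpolation idea you must either prove that quantitative correction lemma or switch to the progressive-replacement family, where the boundary conditions hold identically and only continuity remains to be checked.
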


 \begin{lemma}\label{fundlemma} (Lemma 2.14 in \cite{papera}) The length of a replacement is at most the length of the associated fragment with equality if and only if these are identical.
\end{lemma}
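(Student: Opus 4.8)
The plan is to recognize this as the short-arc case of a Dubins-type extremal problem and to prove the stronger statement that \emph{the replacement is a length minimiser} among all $\kappa$-constrained curves sharing the fragment's two endpoints \emph{and} its endpoint tangent vectors; the inequality and the equality clause then both follow. First I would normalise: rescale so that $r=1$ (so Lemmas \ref{rad} and \ref{seg} apply verbatim), translate and rotate so that $\sigma(0)$ is the origin with initial tangent along the positive $x$-axis. The crucial structural input is the hypothesis buried in Definition \ref{fragfbp}: a fragment has length strictly less than $r$. Since the curvature is bounded by $1/r$, the total turning of the unit tangent along any fragment is at most ${\mathcal L}(\sigma)/r < 1$ radian. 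This keeps us entirely in the ``small'' regime, where no closed loop, no sub-arc of turning $\ge \pi$, and hence no $CCC$ configuration can occur; the only candidate minimiser shape is $CSC$, which is exactly the form of a replacement.

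Next I would establish existence and the shape of a minimiser. Existence of a shortest $\kappa$-constrained curve with the prescribed boundary data follows by Arzel\`a--Ascoli applied to the family of competitors of length at most ${\mathcal L}(\sigma)$, using that $\sigma$ is $C^1$ with a uniform Lipschitz bound on $\sigma'$ coming from the curvature constraint; the length functional is lower semicontinuous in the resulting $C^1$ limit. I would then characterise the minimiser by a local perturbation (Pontryagin-type) argument: away from the endpoints the minimiser must either run straight or bend with \emph{maximal} curvature $1/r$, so it is a finite concatenation of radius-$r$ arcs and segments. The turning bound $<1$ radian from the previous paragraph rules out every such concatenation except the arc--segment--arc pattern, and it forces each arc to subtend a small angle. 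Here the two projection lemmas do the quantitative work: Lemma \ref{seg} accounts for the length of the segment piece (the competitor can never be shorter than the net displacement it must realise in the relevant direction) and Lemma \ref{rad} accounts for the length swept by each maximal-curvature arc (a curve constrained to the exterior of the osculating radius-$r$ disk at an endpoint must spend length at least equal to the angle it turns through). Because the boundary data are the same for the replacement and for the minimiser, and because the $CSC$ curve with given endpoints and endpoint directions in the small-turning regime is unique, the minimiser coincides with the replacement.

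Combining these, ${\mathcal L}(\text{replacement}) = {\mathcal L}(\text{minimiser}) \le {\mathcal L}(\sigma)$, which is the desired inequality after rescaling back by $r$. For the equality clause I would trace equality back through the comparison lemmas: equality in Lemma \ref{seg} forces the competitor to coincide with the straight segment (since $z=\int \sigma_y' \le \int |\sigma'|$ is an equality only when $\sigma_x'\equiv 0$), and equality in Lemma \ref{rad} forces the competitor to coincide with the extremal arc. Hence ${\mathcal L}(\sigma)={\mathcal L}(\text{replacement})$ can hold only if $\sigma$ is itself a length minimiser, i.e.\ only if $\sigma$ equals the unique $CSC$ replacement; the converse is trivial.

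The main obstacle is the characterisation step in the second paragraph: rigorously showing, within the merely $C^1$ and piecewise-$C^2$ class of Definition \ref{cbc}, that a minimiser is a concatenation of maximal-curvature arcs and segments and that the only admissible pattern for turning less than one radian is $CSC$. The delicate points are handling the boundary of the curvature constraint (the arcs sit on the constraint $\|\sigma''\|=1/r$) and excluding exotic extremals with infinitely many switching points; the sub-$r$ length hypothesis is precisely what tames these issues and yields uniqueness, so I would keep that bound in the foreground throughout.
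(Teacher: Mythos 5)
First, note that the paper does not actually prove this statement: it is quoted verbatim as Lemma 2.14 of \cite{papera}, and the proof lives there. The argument in \cite{papera} is a \emph{direct} three-piece comparison, not an optimality argument: one first shows that a fragment, having length $<r$, cannot enter the interiors of the two radius-$r$ disks tangent to it at its endpoints (its ``adjacent'' disks, from which the replacement's two arcs are taken); then the initial portion of the fragment is bounded below in length by the angle of the replacement's first arc via Lemma \ref{rad}, the final portion by the second arc likewise, and the middle portion by the replacement's straight segment via Lemma \ref{seg}. The equality clause falls out of the equality cases of those two lemmas piece by piece. No existence of minimisers, no characterisation of extremals, and no uniqueness of $CSC$ solutions is needed.

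Your route — prove that the replacement is the global length minimiser for its boundary data and deduce the comparison — is logically sound in outline but has a genuine gap exactly where you flag it: the assertion that a minimiser in the $C^1$, piecewise-$C^2$ class of Definition \ref{cbc} is a finite concatenation of maximal-curvature arcs and line segments, with $CSC$ the only admissible pattern in the small-turning regime, is essentially Dubins' theorem \cite{dubins 1}. That is a substantial result whose known proofs (Dubins' original one, or the optimal-control versions) are at least as long as the lemma you are trying to establish, and your sketch does not supply the missing pieces: the exclusion of extremals with infinitely many switches, the closure issue in your Arzel\`a--Ascoli step (a $C^1$ limit of curves in Definition \ref{cbc} is only $C^{1,1}$, not piecewise $C^2$), and the asserted uniqueness of the short $CSC$ solution, which you need in order to identify the abstract minimiser with the \emph{specific} replacement of Definition \ref{fragfbp}'s framework. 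There is also a circularity concern within this body of work: the fragment--replacement machinery is the tool used in \cite{papera, paperb, paperd} to \emph{derive} the structure of length minimisers, so deducing the lemma from that structure inverts the intended logical order (unless you import Dubins' 1957 theorem wholesale as an external black box). The fix is to abandon the detour through global minimisers and run the comparison directly against the replacement: confine the fragment outside the two adjacent disks using the length bound $<r$, then apply Lemmas \ref{rad} and \ref{seg} to the three corresponding sub-arcs of the fragment, tracking equality in each.
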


 The {\it normalisation process} replaces any $\kappa$-constrained curve $\sigma$ with a prescribed fragmentation by a $cs$ curve, called its {\it normalisation}, which is $\kappa$-constrained homotopic to $\sigma$. We $\kappa$-constrained homotope each fragment to a {\sc csc} curve (the replacement). Note that the complexity of the normalisation will depend on the fragmentation. The {\it reduction process} corresponds to a sequence of $\kappa$-constrained homotopies so that at each step an initial $cs$ curve is $\kappa$-constrained homotoped to a non-longer $cs$ curve having no higher complexity than the initial one. We start with the normalisation, and after a finite number of steps, we end up with a length minimiser in the homotopy class of $\sigma$ (see \cite{paperd}).

\begin{proposition} \label{csformfbp} A $\kappa$-constrained plane curve $\sigma$ is $\kappa$-constrained homotopic to a $cs$ curve of length at most the length of $\sigma$.
\end{proposition}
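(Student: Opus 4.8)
The plan is to realise the normalisation process announced just before the statement as an explicit concatenation of fragment-wise homotopies, and then to control the total length additively. First I would fix a fragmentation $0=t_0<t_1<\cdots<t_m=s$ of $\sigma$ as in Definition \ref{fragfbp}, so that each fragment $\sigma|_{[t_{i-1},t_i]}$ has length strictly less than $r$. Such a fragmentation always exists, since $\sigma$ has finite length $s$ and one may simply insert enough subdivision points to force every piece below the threshold $r=\frac{1}{\kappa}$.

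Next, to each fragment I would associate its replacement: the {\sc csc} curve sharing the same initial and final points and the same initial and final tangent vectors. By Proposition \ref{homotopyfragment} each fragment is $\kappa$-constrained homotopic to its replacement. Because the replacement is built to match both the endpoints \emph{and} the boundary tangent directions of the fragment at $\sigma(t_{i-1})$ and $\sigma(t_i)$, the concatenation of the $m$ replacements is again a $C^1$ curve---indeed a $cs$ curve, namely the normalisation $\hat\sigma$ of $\sigma$ relative to the chosen fragmentation.

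The length estimate is the routine part. Lemma \ref{fundlemma} gives, for each index $i$, the bound $\mathcal L(\hat\sigma|_{[t_{i-1},t_i]})\le \mathcal L(\sigma,t_{i-1},t_i)$, and summing over $i=1,\dots,m$ yields $\mathcal L(\hat\sigma)\le \sum_{i=1}^m \mathcal L(\sigma,t_{i-1},t_i)=s=\mathcal L(\sigma)$, which is exactly the claimed inequality.

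The step I expect to be the main obstacle is assembling the fragment-wise homotopies into a single global $\kappa$-constrained homotopy that fixes the endpoints $x$ and $y$ and preserves $C^1$ regularity at every parameter. I would carry out the $m$ homotopies one fragment at a time: at stage $i$ I deform only the $i$-th fragment to its replacement while leaving all other pieces fixed. The crucial point is that the homotopy supplied by Proposition \ref{homotopyfragment} keeps the fragment's endpoints and its boundary tangent vectors fixed throughout, so at every intermediate parameter the deformed piece still glues $C^1$-smoothly to its two neighbours; hence each intermediate curve is a genuine element of $\Sigma(x,y)$ and the curvature bound $\|\sigma''\|\le\kappa$ is never violated. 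Concatenating the $m$ stages (after reparametrising in the homotopy parameter $p\in[0,1]$) then produces the desired $\kappa$-constrained homotopy from $\sigma$ to the $cs$ curve $\hat\sigma$, completing the argument.
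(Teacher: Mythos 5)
Your proposal is correct and follows essentially the same route as the paper: fix a fragmentation, replace each fragment by its {\sc csc} replacement via Proposition \ref{homotopyfragment}, sum the length bounds from Lemma \ref{fundlemma}, and reassemble. Your additional care about performing the fragment-wise homotopies one at a time and checking $C^1$ gluing at the subdivision points is a welcome elaboration of what the paper compresses into ``after a reparametrisation,'' but it is the same argument.
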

\begin{proof} Consider a fragmentation for $\sigma \in \Sigma(x,y)$. Consider for each fragment a replacement which by Lemma \ref{fundlemma} is of length at most the length of the fragment. Then we apply Proposition \ref{homotopyfragment} to conclude that each fragment is bounded-homotopic to its replacement. After a reparametrisation we obtain that $\sigma$ is $\kappa$-constrained homotopic to a $cs$ curve of length at most the length of $\sigma$.
\end{proof}

\begin{theorem}\label{closedhomot} The space $\Sigma(x,x)$ corresponds to a single homotopy class of $\kappa$-constrained plane curves (see Figure \ref{figclosedminmov}).
\end{theorem}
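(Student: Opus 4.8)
The plan is to pin down a single canonical representative of $\Sigma(x,x)$ --- the boundary of a radius $r$ disk through $x$ --- and then to show both that every closed $\kappa$-constrained curve can be $\kappa$-constrained homotoped to such a circle and that any two such circles are themselves $\kappa$-constrained homotopic. Together these two facts force $\Sigma(x,x)$ to be path connected, i.e. to consist of a single homotopy class. Note that the lens region $\mathcal I$ of Definition \ref{leq2r} degenerates to the point $x$ when $d=0$, so no trapping phenomenon can occur and the closed case should behave like the boundary case $d=2r$ rather than like $0<d<2r$.

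First I would invoke Proposition \ref{csformfbp} to replace an arbitrary $\sigma\in\Sigma(x,x)$ by a $\kappa$-constrained homotopic $cs$ curve of no greater length, reducing the problem to $cs$ curves. Next I would record the crucial observation that every closed $C^1$ curve admits parallel tangents in the sense of our definition. Indeed, if all the tangent directions $\sigma'(t)$ lay in an open half-circle $\{v:\langle v,u\rangle>0\}$ for some fixed $u$, then $t\mapsto\langle\sigma(t),u\rangle$ would be strictly increasing, contradicting the periodicity of a closed curve; hence the image of the tangent indicatrix, being connected, is not contained in any open half-circle and therefore contains a pair of antipodal directions, which is exactly a pair of tangents that are parallel and oppositely oriented. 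Consequently the train track displacement of Remark \ref{train} and the operation of type {\sc III} of Proposition \ref{partan} are always available for closed curves.

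With parallel tangents guaranteed, I would run the normalisation and reduction process on the $cs$ curve to obtain a length minimiser in its homotopy class, and the heart of the argument is to show that this minimiser must be the boundary of a radius $r$ disk. By Theorem \ref{loopbound} the global shortest closed $\kappa$-constrained curve is precisely such a circle; the point to establish is that no non-circular closed $cs$ curve can be a length minimiser within its class, since any such curve, having parallel tangents, still admits a strictly length- or complexity-reducing operation of type {\sc I}, {\sc II} or {\sc III}. This step, realised concretely by the sequence of deformations depicted in Figure \ref{figclosedminmov}, is where I expect the main obstacle to lie: one must verify that the reduction never stalls at a bent configuration and that the operations available from the guaranteed parallel tangents genuinely drive the complexity down to that of a single circle.

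Finally I would connect all the circles to conclude. Given two radius $r$ disks whose boundaries pass through $x$, their centres both lie on the circle of radius $r$ about $x$; sliding one centre to the other along that circle sweeps out a continuous one-parameter family of radius $r$ circles, each passing through $x$ and each of curvature exactly $\kappa=\tfrac{1}{r}$. This is manifestly a $\kappa$-constrained homotopy inside $\Sigma(x,x)$, so all radius $r$ circles through $x$ lie in one class. Combining this with the previous paragraph shows that every element of $\Sigma(x,x)$ is $\kappa$-constrained homotopic to one fixed circle, which establishes that $\Sigma(x,x)$ is a single homotopy class and proves Theorem \ref{closedhomot}.
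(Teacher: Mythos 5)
Your proposal follows essentially the same route as the paper: normalise via Proposition \ref{csformfbp}, run the reduction process to a length minimiser identified as a radius $r$ circle through $x$ by Theorem \ref{loopbound}, connect all such circles, and conclude by transitivity. The two details you elaborate --- that closed curves always admit parallel tangents, and the explicit rotation of centres joining any two radius $r$ circles through $x$ --- are left implicit in the paper (the latter only as a footnote), and the step you flag as the main obstacle (that the reduction cannot stall before reaching a circle) is precisely the step the paper also delegates wholesale to the reduction process of \cite{paperd}.
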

\begin{proof} Consider a fragmentation for $\sigma \in \Sigma(x,x)$. By applying the reduction process in \cite{paperd} to $\sigma$ we obtain that $\sigma$ is $\kappa$-constrained homotopic to a minimal length element in its homotopy class i.e, a radius $r$ circle containing the base point $x$ (cf. Theorem \ref{loopbound}). On the other hand, by considering a different element $\gamma \in \Sigma(x,x)$ and by applying the reduction process to $\gamma$, we conclude that $\gamma$ is also $\kappa$-constrained homotopic to a minimal length element in its homotopy class\footnote{It is easy to see that there is an infinite number of such circles all of them $\kappa$-constrained homotopic one to another.}. Therefore, by transitivity, we conclude that $\sigma$ and $\gamma$ are $\kappa$-constrained homotopic.
\end{proof}

{ \begin{figure}[htbp]
 \begin{center}
\includegraphics[width=1\textwidth,angle=0]{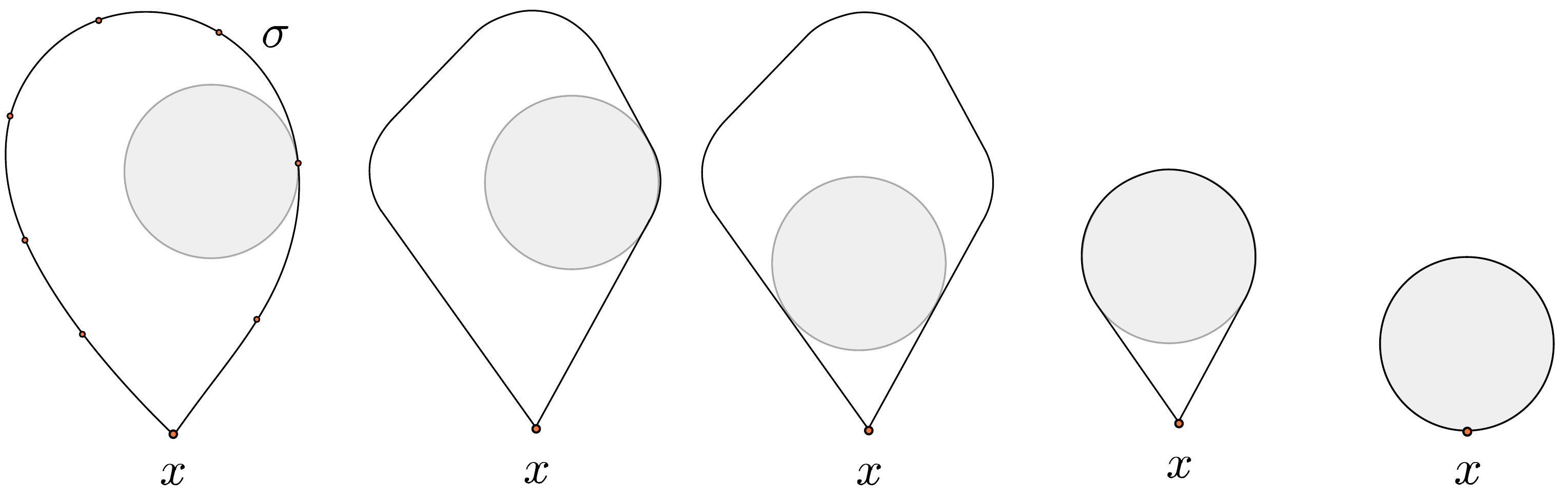}
\end{center}
\caption{An illustration of Theorem \ref{closedhomot} via the reduction process in Remark \ref{redprocess}. A curve $\sigma \in \Sigma(x,x)$. The dots are images of points in a fragmentation. Each fragment is $\kappa$-constrained homotopic to a {\sc csc} curve. A closed $cs$ curve is $\kappa$-constrained homotopic to a radius $r$ circle.}
\label{figclosedminmov}
\end{figure}}

\begin{remark} Recall from Definition \ref{leq2r} that for $x,y\in {\mathbb R}^2$ with $0<d<2$ we have the set ${\mathcal I} ={int}( D_1\cap D_2)$. Here $D_1$ and $D_2$ are the two radius $r$ circles containing both $x$ and $y$ in their boundaries.
\end{remark}

\begin{theorem} \label{Iconnect}Choose $x,y\in {\mathbb R}^2$ so that $0<d<2r$. Then the space of $\kappa$-constrained plane curves {\it in} $cl({\mathcal I})$ correspond to a homotopy class of embedded curves in $\Sigma(x,y)$.
\end{theorem}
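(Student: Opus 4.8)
The plan is to read the statement as asserting two things about the set of $\kappa$-constrained curves whose image lies in $cl(\mathcal I)$: that each such curve is embedded, and that any two of them are $\kappa$-constrained homotopic, so that collectively they occupy a single homotopy class of $\Sigma(x,y)$. I would prove these two facts separately. First set coordinates $x=(-d/2,0)$, $y=(d/2,0)$, placing the centres of $D_1,D_2$ at $(0,-h)$ and $(0,h)$ with $h=\sqrt{r^2-(d/2)^2}$; since $0<d<2r$ we get $0<h<r$. A short computation then records the three geometric facts I will lean on: $cl(\mathcal I)=D_1\cap D_2$ is convex (being an intersection of disks), it contains the segment $L=\overline{xy}$, and its inradius equals $r-h<r$, so no radius-$r$ disk embeds in it.

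For embeddedness, suppose some $\sigma\in\Sigma(x,y)$ with image in $cl(\mathcal I)$ had a self-intersection. Taking the innermost self-crossing produces a simple closed $\kappa$-constrained sub-loop $\ell\subseteq cl(\mathcal I)$. Such a loop encloses a radius-$r$ disk $D'$ (the circle of Theorem~\ref{loopbound} being the extremal instance of this bounded-curvature fact); by convexity of $cl(\mathcal I)$ the region bounded by $\ell$ lies in $cl(\mathcal I)$, so $D'\subseteq cl(\mathcal I)$, contradicting that the inradius of $cl(\mathcal I)$ is $r-h<r$. Hence no self-intersection exists. As an alternative route avoiding that fact, I would push $\ell$ strictly inside $int(D_1)$ and $int(D_2)$ via Lemma~\ref{ntp} (since $\ell\neq C_1,C_2$), and then sweep a radius-$r$ circle onto $\ell$ as in the proof of Lemma~\ref{r1} to produce a tangency forbidden by Lemma~\ref{ntp}.

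For the single-class claim I would argue that every $\sigma$ with image in $cl(\mathcal I)$ is $\kappa$-constrained homotopic to the segment $L$. By Proposition~\ref{csformfbp} and the reduction process of Subsection~\ref{redprocess}, $\sigma$ is $\kappa$-constrained homotopic to a length minimiser $m$ of its own homotopy class. By Proposition~\ref{segbound} and Corollary~\ref{minoutside}, the only candidates for $m$ are $L$ (the global minimiser) or one of the longer arcs of $C_1,C_2$ (the minimisers not in $cl(\mathcal I)$). To force $m=L$ I invoke the trapping results: Proposition~\ref{c} shows the inner arcs of $\partial(\mathcal I)$, and hence $L$ (which is $\kappa$-constrained homotopic to them), cannot be homotoped to a curve leaving $cl(\mathcal I)$, so $L$ and the longer arcs sit in distinct classes; were $m$ a longer arc, then $\sigma$ and thus the inner arcs would be homotopic to an outside curve, contradicting Proposition~\ref{c}. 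Therefore $m=L$ for every such $\sigma$, all these curves are mutually homotopic, and together with the embeddedness above they form one homotopy class of embedded curves.

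The main obstacle is the last step: certifying that an inside curve cannot be reduced out of $cl(\mathcal I)$, i.e. that the minimiser delivered by the reduction is $L$ and not an outer arc. This is precisely where the \emph{trapping} geometry must be used carefully — the forbidden region $\mathcal E$ of Definition~\ref{leq2r}, via Theorem~\ref{below} and its corollary (no curve in $\mathcal U$ can carry points of $\mathcal E$ on both sides), is what seals $cl(\mathcal I)$ off and separates the class of $L$ from the class of the longer arcs. A secondary point I would pin down is the appeal to ``a simple closed $\kappa$-constrained curve encloses a radius-$r$ disk'' in the embeddedness step; I expect to either cite it as standard (consistent with Theorem~\ref{loopbound}) or replace it entirely by the Lemma~\ref{ntp} sweeping argument indicated above, which stays strictly within the tools already developed in the paper.
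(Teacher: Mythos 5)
Your overall strategy coincides with the paper's: the single-class claim is obtained by running the normalisation/reduction process of Subsection \ref{redprocess} down to a length minimiser and identifying that minimiser with the segment $L$, and embeddedness is obtained by extracting a simple sub-loop from a hypothetical self-intersection and placing a radius-$r$ disk inside it. Two remarks. First, your endgame for embeddedness (the inscribed radius-$r$ disk lies in the convex set $cl(\mathcal I)$, whose inradius is $r-\sqrt{r^2-(d/2)^2}<r$) is a clean alternative to the paper's, which instead feeds the loop into Corollary \ref{crossect} to produce a cross section joining points at distance at least $2r$ and contradicts the diameter of $cl(\mathcal I)$ being $<2r$; both work, and yours is arguably more direct. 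You are also right to be suspicious of ``every simple closed $\kappa$-constrained curve contains a radius-$r$ disk'': this does not follow from Theorem \ref{loopbound}, but it is exactly the Pestov--Ionin lemma, which the paper cites as \cite{pestov}; with that citation in place your fallback via Lemma \ref{ntp} is unnecessary.

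Second, the step ruling out the longer arcs as the minimiser of $\sigma$'s class is circular as written: ``were $m$ a longer arc, then $\sigma$ and thus the inner arcs would be homotopic to an outside curve'' presupposes that $\sigma$ is homotopic to the inner arcs, which is essentially the statement being proved. Proposition \ref{c} traps only the arcs of $\partial(\mathcal I)$, not an arbitrary $\sigma$ {\it in} $cl(\mathcal I)$. What you actually need is the stronger trapping statement that no curve {\it in} $cl(\mathcal I)$ is $\kappa$-constrained homotopic to a curve {\it not in} $cl(\mathcal I)$; the argument behind Proposition \ref{c} (a curve with a point above or below the relevant circle admits a cross section by Corollary \ref{crossect} and hence has diameter at least $2r$, whereas curves in $cl(\mathcal I)$ have diameter $<2r$, and this quantity varies continuously along a homotopy) applies verbatim with the boundary arc replaced by $\sigma$, so the gap is easily repaired. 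In fairness, the paper's own proof is silent at exactly this point --- it simply asserts that the minimiser in the class of a curve in $cl(\mathcal I)$ is the segment --- so you have correctly located the delicate step even though your patch needs to be restated noncircularly.
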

\begin{proof} Consider a fragmentation for $\sigma \in \Sigma(x,y)$ {\it in} $cl({\mathcal I})$. By applying to $\sigma$ the reduction process described in Remark \ref{redprocess} we obtain that $\sigma$ is $\kappa$-constrained homotopic to the unique minimal length element in its homotopy class i.e., the line segment joining $x$ and $y$ (cf. Theorem \ref{segbound}). On the other hand, by considering a fragmentation for different element $\gamma \in \Sigma(x,y)$ {\it in} $cl({\mathcal I})$ and by applying the reduction process to it, we conclude that $\gamma$ is also $\kappa$-constrained homotopic to the minimal length element in its homotopy class. Therefore, by transitivity, we conclude that $\gamma$ and $\sigma$ are $\kappa$-constrained homotopic curves. To check that such curves are actually embedded, let $\sigma \in \Sigma(x,y)$ be {\it in} $cl({\mathcal I})$ having self intersections. Consider $\sigma$ in between the first self intersection. By the Pestov-Ionin Lemma (\cite{pestov}) there exists a radius $r$ disk in the interior component of $\sigma$ in between the considered self intersection. By virtue of Corollary \ref{crossect} we have that $\sigma$ has a cross section. Since $d>0$ we have that the diameter of $cl(\mathcal I)$ is lesser than $2r$ implying that $\sigma$ is a curve {\it not in} $cl({\mathcal I})$ leading to a contradiction.
\end{proof}

Note that if $0<d<2r$ we have determined that $|\Sigma(x,y)|\geq 2$. Next results proves that indeed $|\Sigma(x,y)|= 2$.

\begin{theorem}\label{Outconnect}Choose $x,y\in {\mathbb R}^2$ so that $0<d<2r$. Then the space of $\kappa$-constrained plane curves {\it not in} $cl({\mathcal I})$ is a homotopy class in $\Sigma(x,y)$.
\end{theorem}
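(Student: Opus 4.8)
The plan is to follow the template of the proof of Theorem~\ref{Iconnect}, replacing the line segment by the longer arcs and the appeal to Theorem~\ref{segbound} by an appeal to Corollary~\ref{minoutside}. First I would fix two arbitrary curves $\sigma,\gamma\in\Sigma(x,y)$ that are \emph{not in} $cl(\mathcal I)$ and choose a fragmentation for each. Applying the normalisation followed by the reduction process of Subsection~\ref{redprocess} produces $\kappa$-constrained homotopies carrying $\sigma$ and $\gamma$ to length minimisers $\sigma_m$ and $\gamma_m$ in their respective homotopy classes; both minimisers are $cs$ curves admitting no further length-reducing operation.

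Next I would verify that $\sigma_m$ and $\gamma_m$ are themselves \emph{not in} $cl(\mathcal I)$. Were $\sigma_m$ \emph{in} $cl(\mathcal I)$, Theorem~\ref{Iconnect} would place it in the homotopy class of the arcs in $\partial(\mathcal I)$; being $\kappa$-constrained homotopic to $\sigma$, this would make $\sigma$ homotopic to those arcs, contradicting Proposition~\ref{c}. The identical argument applies to $\gamma_m$, so each class-minimiser remains outside $cl(\mathcal I)$.

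The decisive step is to identify $\sigma_m$ (and likewise $\gamma_m$) with one of the two longer arcs of Corollary~\ref{minoutside}. Since $\sigma_m$ is a $cs$ length minimiser lying \emph{not in} $cl(\mathcal I)$, I would argue that it can contain neither a line-segment component nor more than one circular component: any such configuration either admits a further operation of type~\textsc{I} or~\textsc{II} strictly decreasing its length, or produces parallel tangents and hence, by Corollary~\ref{crossect}, a cross section, which is incompatible with $d<2r$. This forces $\sigma_m$ to be a single arc of a radius~$r$ circle through $x$ and $y$; the shorter such arcs lie in $\partial(\mathcal I)\subset cl(\mathcal I)$, so minimality together with the exclusion above leaves only the longer arcs in $C_1$ or $C_2$.

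Finally, Proposition~\ref{homotarcmov} shows the longer arcs of $C_1$ and $C_2$ are $\kappa$-constrained homotopic to one another, so $\sigma_m$ and $\gamma_m$ are homotopic, and transitivity yields that $\sigma$ and $\gamma$ are $\kappa$-constrained homotopic; as the two curves were arbitrary, all curves \emph{not in} $cl(\mathcal I)$ lie in a single homotopy class. I expect the main obstacle to be precisely the decisive step: the reduction process only guarantees a minimiser \emph{within} the homotopy class of $\sigma$, whereas Corollary~\ref{minoutside} characterises the \emph{global} minimisers among all curves not in $cl(\mathcal I)$. The real work lies in ruling out a strictly longer critical $cs$ configuration that survives the reduction while staying outside $cl(\mathcal I)$, thereby confirming that the class-minimiser actually attains this global outer minimum.
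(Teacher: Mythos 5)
Your proposal follows essentially the same route as the paper: reduce each curve to the length minimiser of its homotopy class, identify that minimiser with one of the longer arcs via Corollary~\ref{minoutside}, and finish with Proposition~\ref{homotarcmov} and transitivity. You are in fact more careful than the paper in two places. The paper only rules out the line segment as the class minimiser (via Proposition~\ref{c}) and then passes directly to ``the minimal $\kappa$-constrained curve not in $cl(\mathcal I)$ must be the longer arc,'' silently conflating the minimiser \emph{within the homotopy class of} $\sigma$ with the \emph{global} minimiser among all curves not in $cl(\mathcal I)$ --- exactly the gap you flag at the end; your intermediate step showing the class minimiser cannot lie in $cl(\mathcal I)$ at all (via Theorem~\ref{Iconnect} and Proposition~\ref{c}) is a genuine addition. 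One caveat on your ``decisive step'': the sub-argument that a second circular component or a segment would yield parallel tangents, hence a cross section, hence a contradiction with $d<2r$ does not work here. A cross section only contradicts confinement to a set of diameter less than $2r$ (as in Theorem~\ref{Iconnect} and Theorem~\ref{below}); a curve free to leave $cl(\mathcal I)$ may well admit one, and indeed the longer arcs themselves have parallel tangents. So the classification of the critical $cs$ configurations outside $cl(\mathcal I)$ still requires an argument of a different kind; the paper does not supply one either, deferring implicitly to the reduction machinery of \cite{paperd}.
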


\begin{proof} Consider a fragmentation for a curve $\sigma \in \Sigma(x,y)$ {\it not in} $cl({\mathcal I})$. By applying the reduction process in Remark \ref{redprocess} to $\sigma$ we obtain that $\sigma$ is $\kappa$-constrained homotopic to the minimal element in its homotopy class. By virtue of Proposition \ref{c} such a curve cannot be the line segment joining $x$ and $y$ since the latter is {\it in} $cl(\mathcal I)$. By Corollary \ref{minoutside} the minimal $\kappa$-constrained curve {\it not in} $\mbox{\rm cl}({\mathcal I})$ must be the longer arc joining $x$ and $y$ in $C_1$ or $C_2$.  By considering different element $\gamma \in \Sigma(x,y)$ {\it not in} $cl({\mathcal I})$ and by applying the reduction process to it, we conclude that $\gamma$ is also $\kappa$-constrained homotopic to a minimal length element in its homotopy class i.e., one of the longer arcs in $C_1$ or $C_2$ joining $x$ and $y$. By Proposition \ref{homotarcmov} the longer arcs joining $x$ and $y$ in $C_1$ and $C_2$ are $\kappa$-constrained homotopic. By transitivity, we conclude that $\gamma$ and $\sigma$ are $\kappa$-constrained homotopic.
\end{proof}

\begin{theorem}\label{Oneconnect} If $d \geq 2r$. Then $|\Sigma(x,y)|=1$.
\end{theorem}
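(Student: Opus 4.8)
The plan is to prove that every $\kappa$-constrained curve in $\Sigma(x,y)$ is $\kappa$-constrained homotopic to the line segment joining $x$ and $y$; since $\sim$ is an equivalence relation, this yields $|\Sigma(x,y)|=1$. As in Theorems \ref{Iconnect} and \ref{Outconnect}, the engine is the normalise--reduce scheme: I first pass to a fragmentation of an arbitrary $\sigma\in\Sigma(x,y)$, normalise, and apply the reduction process of Remark \ref{redprocess} together with Proposition \ref{csformfbp} to obtain a $cs$ length minimiser $m$ in the homotopy class of $\sigma$. By Proposition \ref{segbound} the unique global length minimiser in $\Sigma(x,y)$ is the segment, of length $d$, so it is enough to show that for $d\geq 2r$ the reduction cannot terminate at a curve strictly longer than the segment; equivalently, that $\Sigma(x,y)$ carries no trapped homotopy class.

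The conceptual point is that the obstruction producing the second class when $0<d<2r$ simply evaporates once $d\geq 2r$. That obstruction lived on the lens $\mathcal I=int(D_1\cap D_2)$ cut out by the two distinct radius $r$ circles $C_1,C_2$ through $x$ and $y$, and the trapped minimisers were the longer arcs of Corollary \ref{minoutside}. A radius $r$ circle meets two points at distance $d$ only when $d\leq 2r$, and at $d=2r$ its two possible centres coalesce at the midpoint of $xy$, so $C_1=C_2$ and the lens degenerates. Hence for $d>2r$ there is no radius $r$ arc joining $x$ and $y$ at all: the terminal curves of the reduction, being concatenations of radius $r$ arcs and segments with free endpoint tangents, have no nonsegment minimiser available, so the reduction ends at the segment and this case is settled.

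The sharp and more delicate case is the threshold $d=2r$, where the single circle with $xy$ as diameter still offers the two semicircular arcs as candidate minimisers. Here I would \emph{not} use a train track displacement: the semicircle's only parallel tangents sit at its (fixed) endpoints, which Proposition \ref{partan} may not move. Instead I would deform the semicircle onto the segment through the one-parameter family of circular arcs through $x$ and $y$ of radius $\rho$, letting $\rho$ increase from $r$ to $\infty$. Writing $u=1/\rho\in[0,1/r]$, the curvature $u$ decreases from $\kappa$ to $0$, the arc passes continuously (in the $C^1$ metric) from the semicircle at $u=\kappa$ to the segment at $u=0$, and the bound $u\leq\kappa$ is respected throughout; this is a $\kappa$-constrained homotopy fixing $x$ and $y$. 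Consequently both semicircles lie in the segment's class, every minimiser does as well, and by transitivity so does $\sigma$.

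I expect the main obstacle to be the careful justification that the reduction can only terminate at the segment once the lens is absent, i.e. that no exotic local length minimiser survives for $d>2r$; this is exactly the place where one must lean on the reduction machinery of \cite{paperd}. A useful sanity check that the threshold is correct is to observe why the inflation homotopy fails for $0<d<2r$: there the trapped minimiser is the \emph{major} arc (central angle $>\pi$), and inflating its radius sends it off to infinity rather than onto the chord, so it cannot reach the segment --- which is precisely why two classes persist below $2r$ but collapse to one at and above $2r$.
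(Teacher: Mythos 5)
Your proposal follows essentially the same route as the paper, whose proof of Theorem \ref{Oneconnect} is literally ``identical to Theorem \ref{Iconnect}'': fragment, normalise, apply the reduction process to land on the length minimiser (the segment, by Proposition \ref{segbound}), and conclude by transitivity, with the real burden carried by the reduction machinery of \cite{paperd} exactly as you anticipate. Your explicit radius-inflation homotopy from the semicircle to the chord at the threshold $d=2r$, and your remark on why it fails for the major arc when $0<d<2r$, are correct and supply a justification the paper leaves implicit.
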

\begin{proof} The proof is identical as in Theorem \ref{Iconnect}.
\end{proof}

\section{Main result}

\begin{theorem}\label{mainthmcbc} Choose $x,y \in {\mathbb R}^2$. Then:
\[  |\Sigma(x,y)| = \left\{ \begin{array}{lll}
        1\,\,\,\,\,\,\,\,\,\,\,\,\,\,\,\,\,\,\,\,\,\,\,\,\,\,\,\,  d=0\\
        2\,\,\,\,\,\,\,\,\,\,\,\,\,\,\,\,\, 0<d<2r\\
       1\,\,\,\,\,\,\,\,\,\,\,\,\,\,\,\,\,\,\,\,\,\,\,\,\,\,\,\,  d\geq 2r \end{array} \right. \]
\end{theorem}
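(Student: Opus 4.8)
The plan is to prove Theorem \ref{mainthmcbc} purely by assembly: split into the three regimes $d=0$, $0<d<2r$, and $d\geq 2r$, and in each case read off the count of homotopy classes from the structural results already in hand. The two boundary regimes are immediate. For $d=0$ the curves are closed and Theorem \ref{closedhomot} identifies $\Sigma(x,x)$ with a single homotopy class, so $|\Sigma(x,x)|=1$. For $d\geq 2r$ the count $|\Sigma(x,y)|=1$ is exactly the content of Theorem \ref{Oneconnect}. Thus the only regime requiring genuine argument is $0<d<2r$, where I must establish both an upper and a lower bound of $2$.

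For the upper bound $|\Sigma(x,y)|\leq 2$, I would first record the elementary but essential dichotomy coming from the definition of \emph{in} $X$ versus \emph{not in} $X$: these are logical complements, so every $\sigma\in\Sigma(x,y)$ either lies \emph{in} $cl(\mathcal I)$ or \emph{not in} $cl(\mathcal I)$, and never both. This partitions $\Sigma(x,y)$ into two exhaustive, mutually exclusive families. Theorem \ref{Iconnect} shows the first family is a single homotopy class (of embedded curves, all reducing to the line segment joining $x$ and $y$), and Theorem \ref{Outconnect} shows the second family is a single homotopy class (all reducing, via Corollary \ref{minoutside} and Proposition \ref{homotarcmov}, to a longer arc in $C_1$ or $C_2$). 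Since these two families cover all of $\Sigma(x,y)$, at most two homotopy classes can occur.

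For the lower bound $|\Sigma(x,y)|\geq 2$, I would show the two classes are genuinely distinct, i.e. no curve \emph{in} $cl(\mathcal I)$ is $\kappa$-constrained homotopic to a curve \emph{not in} $cl(\mathcal I)$. By the reduction process of Subsection \ref{redprocess}, any curve \emph{in} $cl(\mathcal I)$ is homotopic to the line segment, which by the remark following Proposition \ref{c} is in turn homotopic to the arcs in $\partial(\mathcal I)$; but Proposition \ref{c} forbids these arcs from being made homotopic to any curve \emph{not in} $cl(\mathcal I)$. Hence the two classes cannot coincide, forcing $|\Sigma(x,y)|=2$ and completing the case $0<d<2r$.

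The main obstacle is not analytic but organisational: every hard estimate has already been absorbed into Lemma \ref{r1}, Corollary \ref{crossect}, and the reduction machinery, so the risk here is a gap in the bookkeeping rather than in the geometry. The two points demanding care are (i) verifying that the partition by \emph{in} $cl(\mathcal I)$ is exhaustive, so that Theorems \ref{Iconnect} and \ref{Outconnect} between them account for \emph{every} element of $\Sigma(x,y)$ with none left over, and (ii) ensuring the lower bound is correctly pinned to Proposition \ref{c} rather than merely to Proposition \ref{trappedcbc}, since it is Proposition \ref{c} that rules out homotoping the boundary arcs outward across $cl(\mathcal I)$. Once these are in place the theorem follows by transitivity of $\kappa$-constrained homotopy.
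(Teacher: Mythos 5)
Your proposal is correct and follows essentially the same route as the paper: the three-case assembly citing Theorem \ref{closedhomot}, Theorems \ref{Iconnect} and \ref{Outconnect}, and Theorem \ref{Oneconnect} is exactly the paper's proof, and your explicit lower-bound argument via Proposition \ref{c} is precisely the observation the paper records in the remark preceding Proposition \ref{segbound} and in the note before Theorem \ref{Outconnect}. Your version merely makes the exhaustiveness of the \emph{in}/\emph{not in} dichotomy and the distinctness of the two classes explicit, which the paper leaves implicit.
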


\begin{proof} If $d=0$, by virtue of Theorem \ref{closedhomot}, we conclude that $|\Sigma(x,y)|=1$. If $0<d<2$, by applying Theorem \ref{Iconnect} and Theorem \ref{Outconnect}, we conclude that $|\Sigma(x,y)|=2$. If $d\geq 2r$, by virtue of Theorem \ref{Oneconnect} we conclude $|\Sigma(x,y)|=1$ concluding the proof.
\end{proof}

In other words, whenever $d=0$ any two closed $\kappa$-constrained curves are $\kappa$-constrained homotopic one to another. On the other hand, if $0<d<2r$ any two $\kappa$-constrained curves are $\kappa$-constrained homotopic one to another if and only if they are either both {\it in} $ cl(\mathcal I)$ or both {\it not in} $cl(\mathcal I)$. Finally, if $d \geq 2r$ any two $\kappa$-constrained curves are $\kappa$-constrained homotopic one to another.

\section{Acknowledgments}

I would like to thank both reviewers for their thorough comments and
suggestions, particularly the second reviewer for the many efforts on behalf of the manuscript.

\bibliographystyle{amsplain}
  
\end{document}